\newtheorem{thm}{Theorem}[section]
\newtheorem{lem}[thm]{Lemma}
\newtheorem{prop}[thm]{Proposition}
\theoremstyle{definition}
\newtheorem{nota}[thm]{Notation}
\theoremstyle{remark}
\newtheorem{rem}[thm]{Remark}
\numberwithin{equation}{section}
\begin{document}
 \author[Ajebbar Omar, Elqorachi Elhoucien and Jafar Ahmed]{Ajebbar Omar$^{1}$, Elqorachi Elhoucien$^{2}$ and Jafar Ahmed$^{2}$}

 \address{%
 	$^1$
 	Sultan Moulay Slimane University, Multidisciplinary Faculty,
 	Department of mathematics,
 	Beni Mellal,
 	Morocco}
\address{%
 	$^2$
 	Ibn Zohr University, Faculty of sciences,
 	Department of mathematics,
 	Agadir,
 	Morocco}
 \email{omar-ajb@hotmail.com, elqorachi@hotmail.com, hamadabenali2@gmail.com}
	 \thanks{2020 Mathematics Subject Classification: primary 39B52, secondary 39B32\\ Key words and phrases:  Semigroup, Kannappan functional equation, Sine subtraction law, Sine addition law, Involutive automorphism.}
	 	
\title[Integral Kannappan-Sine subtraction and addition law on semigroups]{Integral Kannappan-Sine subtraction and addition law on semigroups}
	
	 	\begin{abstract}
		  Let  $S$ be a semigroup, $\mu$   a
		 discrete measure   on $S$ and  $\sigma:S \longrightarrow S$  is an involutive automorphism.
		We determine the  complex-valued solutions of the integral Kannappan-Sine subtraction law  $$\int_{S}f(x\sigma(y)t)d\mu(t)=f(x)g(y)-f(y)g(x),\; x,y \in S,$$ and the integral Kannappan-Sine addition law $$\int_{S}f(x\sigma(y)t)d\mu(t)=f(x)g(y)+f(y)g(x),\; x,y \in S.$$
		 We express the solutions   by means of exponentials on S,  the solutions of the special sine addition law $f(xy)=f(x)\chi(y)+f(y)\chi(x),$ $x,y\in S$ and the solutions of  of the special case of the integral Kannappan-Sine addition law  $\int_{S}f(x\sigma(y)t)d\mu(t)=[f(x)\chi(y)+f(y)\chi(x)]\int_{S}\chi(t)d\mu(t), $ $x,y\in S$, and where    $\chi$: $S\longrightarrow \mathbb{C}$ is an exponential. The continuous solutions on topological semigroups are also given.
	 \end{abstract}
	\maketitle
	\section{Introduction}
Let  $S$ be a semigrpoup, $\mu$ a discrete measure on $S$ and  $\sigma:S\longrightarrow S$ is an  involutive automorphism. That $\sigma$ is involutive means that $\sigma\circ\sigma(x)=x$ for all $x\in S.$\\
The functional equations
\begin{equation}
\label{7} f(x\sigma(y))=f(x)g(y)-f(y)g(x),\;\; x,y \in {S},\end{equation} respectively,
\begin{equation}
\label{elqorachi}f(x\sigma(y))=f(x)g(y)+f(y)g(x),\;\; x,y \in {S},\end{equation} for functions $f, g$ : $S \longrightarrow \mathbb{C}$ are called the subtraction law for the sine, resp. the addition law for the sine   because of
its solution $f = \sin$, $g = \cos$ in the case $S = (\mathbb{R}, +)$ and $\sigma=-Id$. The sine addition and subtraction formula have attracted much interest. Equation (\ref{7}) and (\ref{elqorachi}) has been solved by Poulsen and Stetk\ae r \cite{ebb} on groups, monoids generated by their  squares \cite[Proposition 3.6]{eb}, semigroups generated by their squares \cite[Theorem 5.2]{ajj} and  general monoids \cite[Theorem 4.2]{c}. The most current results about (\ref{7}) and  (\ref{elqorachi})  on general semigroups are \cite[Theorem 4.2]{vb},
\cite[Theorem 4.3  and Theorem 5.2]{as} and \cite[Theorem 3.1]{d}. For other pertinent literature  about these functional equations we refer to  \cite[Ch.13]{a}, \cite[Chapter 4]{g} and their references. Thus equations  (\ref{7}) and    (\ref{elqorachi}) are solved in large generality,  and it makes sense to introduce and solve other like  trigonometric functional equations on semigroups, and expressing their solutions in terms of solutions of (\ref{7}) and (\ref{elqorachi}).\\\\
In 2006, Elqorachi and Redouani \cite{Redouani}  studied the bounded
and continuous solutions of the following trigonometric functional equations \begin{equation}\label{000}
\int_{S}f(xt\sigma(y))d\mu(t)=f(x)g(y)\pm f(y)g(x), \;x,y \in S,\end{equation}\begin{equation}\label{00}\int_{S}f(xt\sigma(y))d\mu(t)=f(x)f(y)+g(y)g(x), \;x,y \in S
\end{equation} on locally compact groups, and where $\mu$ is a bounded complex $\sigma$-invariant measure on $S$. The solutions are expressed by means of $\mu$-spherical functions: $\int_{S}\phi(xty)d\mu(t)=\phi(x)\phi(y),\;x,y\in S$ and solutions of the functional equation $\int_{S}f(xty)d\mu(t)=f(x)\phi(y)+ f(y)\phi(x), \;x,y \in S$. We refer also to \cite{akk1}, \cite{akk2} and \cite{akk3}.
\\\\In  \cite[Proposition 16]{i} Stetk\ae r solved the Kannappan multiplicative functional equation
\begin{equation}\label{4}
f(xyz_{0})=f(x)f(y),\,x,y \in S,
\end{equation}
on semigroups, and where $z_{0}$ is a fixed element in $ S$. The solutions of (\ref{4}) are of the form $f=\chi(z_0)\chi,$ where $\chi:$ $S\to \mathbb{C}$ is a multiplicative function.\\In
\cite[Corollary 2.5]{red} a similar equation \begin{equation}\label{elqorachi1}
\int_{S}f(xyt)d\mu(t)=f(x)f(y),\,x,y \in S,
\end{equation} was solved on semigroups.  The solutions formulas of (\ref{elqorachi1}) are of the form: $f=[\int_{S}\chi(t)d\mu(t)]\chi,$ where $\chi:$ $S\to \mathbb{C}$ is a multiplicative function.
\\In the present paper we shall consider the generalization
\begin{equation}
\label{6}
\int_{S}f(x\sigma(y)t)d\mu(t)=f(x)g(y)-f(y)g(x), \;x,y \in S,
\end{equation}  \begin{equation}
\label{elqorachi2}
\int_{S}f(x\sigma(y)t)d\mu(t)=f(x)g(y)+f(y)g(x), \;x,y \in S.
\end{equation}  If $S$ is a monoid with identity element $e$, then by replacing $y$ by the identity element $e$ in (\ref{6}) and in (\ref{elqorachi2}),   equations (\ref{6}) and (\ref{elqorachi2}) can be written as follows:

$$ g(e) f(x\sigma(y))=f(x)g(y)\mp f(y)g(x),\,x,y\in S$$ with $f(e)=0$ or \begin{equation}\label{key1}
f(x\sigma(y))=f(x)g(y)\mp f(y)g(x)\pm\dfrac{f(e)}{g(e)} g(x\sigma(y)),\,x,y\in S
\end{equation} with $f(e)\neq 0$ and $g(e)\neq 0.$
Furthermore, the second equation (\ref{key1}) with $(-)$: $f(x\sigma(y))=f(x)g(y)- f(y)g(x)+\dfrac{f(e)}{g(e)} g(x\sigma(y)),\,x,y\in S$ reduce to the sine subtraction law
\begin{equation}\label{ast}
F(x\sigma(y))=F(x)g(y)-F(y)g(x), \;x,y \in S,
\end{equation} where $F=\dfrac{g(e)}{f(e)}f -g$,  and then from \cite[Theorem 4.2 ]{c} we derive explicit formulas
for $f$ and $g$ of equation (\ref{6}) on monoids. The solutions of  (\ref{6}) were given by Zeglami et al. assuming that $S$ is a
group     \cite[Theorem 4.2]{l}. On the other hand, equation (\ref{key1}) with $(+)$: $f(x\sigma(y))=f(x)g(y)+ f(y)g(x)-\dfrac{f(e)}{g(e)} g(x\sigma(y)),\,x,y\in S$ is  exactly  the cosine subtraction law
\begin{equation}\label{ast1}
F(x\sigma(y))=F(x)F(y)-(F-g)(y)(F-g)(x), \;x,y \in S,
\end{equation} where $F=\frac{1}{2}(g+\frac{g(e)}{f(e)}f)$, and  which was solved  on semigroups (see for example \cite[Theorem 3.3]{as}).\\\\The purpose of the present paper is to show  how the above relation between (\ref{6}), (\ref{elqorachi2}) and equation (\ref{7}), (\ref{elqorachi}) and (\ref{ast})  on monoids extends to the much wider framework of (\ref{6}) and (\ref{elqorachi2}) on semigroups.  We show that any solution of (\ref{6}), (\ref{elqorachi2})  is closely related to the solutions of (\ref{7})  and (\ref{elqorachi}), and so can be expressed in terms of exponentials and solutions of same special trigonometric functional equations on  semigroup  $S.$\\\\Kannappan's functional equations \cite{PL} and their extensions are treated  in several works. We refer for example to \cite{boui},  \cite{red}, \cite{f}, \cite{ff}, \cite{pe}, \cite{i} and \cite{l}.\\\\
This paper extends our previous results about the functional equations:\\ $f (x\sigma(y)z_{0})= \int_{S}f(x\sigma(y)t)d\delta_{z_0}(t)=f(x)g(y)\mp f(y)g(x)$  on  semigroups (\cite{f}, \cite{ibti}, \cite{ff}).
 Here we find the general solution for all semigroups and for all discrete measure  $\mu=\sum_{i\in I} \alpha_{i}\delta_{z_i}$.
\\\\Our main contributions to the knowledge about  integral Kannappan-Sine subtraction law (\ref{6}) and integral Kannappan-Sine addition  law   (\ref{elqorachi2}) are the following\\
- We extend the setting from groups to semigroups with involution.\\- We relate the solutions of equations (\ref{6}), (\ref{elqorachi2}) to those of equation (\ref{7}), (\ref{elqorachi}) and (\ref{44}).\\-We derive formulas for solutions of (\ref{6}) (Section 3, Theorem 3.6).\\
-We express the solutions of (\ref{elqorachi2}) ( Section 4,  Theorem \ref{t1} ) in term of  exponentials  and the solutions of the following special case of   the  integral Kannappan-Sine  addition law, namely
\begin{equation}
\label{44}
\int_{S} f(x\sigma(y)t)d\mu(t)= f(x)\chi(y)\int_{S} \chi(t)d\mu(t)+f(y)\chi(x)\int_{S} \chi(t)d\mu(t),
\end{equation}
   for all $x, y \in S$, where $\chi$ is an exponential  such that $\int_{S} \chi(t)d\mu(t)\neq0$.\\ It turns out that, like on abelian groups, only multiplicative functions and solutions of some special trigonometric functional equations occur in the solution formulas for $f$ and $g$, and no non-abelian phenomena like group representations crop up but contrasts results for the functional equations (\ref{000}), (\ref{00}). In particular, the solutions of our functional equations are abelian, except for some arbitrary functions.\section{Notation and terminology}
    A     semigroup $S$ is a set with an associative composition rule. A monoid is a semigroup with an identity element. If $S$ is a toplogical semigroup then $C(S)$ denotes the algebra of continuous functions mapping $S$ into $\mathbb{C}$. Let $\mathbb{C}^{*}=\mathbb{C}\setminus\{0\}$. \\ A function $\chi$ : $\longrightarrow \mathbb{C}$ is multiplicative if $ \chi(xy) = \chi(x)\chi(y)$ for all $x, y \in
S$. If in addition $\chi \neq 0$ then we call $\chi$ an \textit{exponential}. \\
For convenience we introduce the following notations:
\begin{nota}
	 Let $\phi_\chi :  S\longrightarrow \mathbb{C}$   denotes  a solution  of  the  following special sine addition law
\begin{equation}\label{111}
\phi_\chi(xy) = \phi_\chi (x)\chi(y) + \chi(x)\phi_\chi (y),\,x, y \in S,
\end{equation}
where  $\chi$ is  an exponential on $S$. The formulas for the function   $\phi_\chi$  are known and can be found in
\cite[Theorem 3.1 (B)]{d}.
\end{nota}
\begin{nota}
Let $\Phi_\chi$ : $S\longrightarrow \mathbb{C}$ denotes a solution of special integral Kannappan-Sine addition law  (\ref{44}).
\end{nota}

\section{Solutions of equation (1.7)}
		In the following proposition we give the solutions formulas for  the functional equation
		\begin{equation}
		\label{04}
		\int_{S}f(x\sigma(y)t)d\mu(t)=f(x)f(y), \; x,y \in S
		\end{equation}
		which  has been solved on semigroups   by Elqorachi \cite[Corollary 2.5 ]{red} for  $\sigma=Id$.
	 	\begin{prop}
	 		\label{p1}
	 	Let  $f  :S\longmapsto\mathbb{C}$  be a solution      the functional equation (\ref{04}). Then we have the following.  \\
	 	(a) 	$\int_{S}f(t)d\mu(t)\neq0 $ if and only if  $f\ne0.$\\
	  (b)	$f=[\int_{S}\chi(t)d\mu(t)]\chi,$	
	 	  where    $\chi:S\longrightarrow \mathbb{C}$ is a  multiplicative function such that  	$\chi\circ \sigma =\chi$.
	 	\end{prop}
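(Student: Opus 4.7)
The plan is to derive a right-multiplicative identity $f(xy)=f(x)\chi(y)$ with $\chi$ an exponential, then substitute it back into the defining equation to pin down the formula for $f$. If $f\equiv0$ both parts are trivial (take $\chi=0$), so assume $f\not\equiv0$.

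First I substitute $y\mapsto\sigma(y)$ in the equation and use $\sigma^2=\mathrm{Id}$ to rewrite it as
\[
\int_{S} f(xyt)\,d\mu(t)=f(x)\,h(y),\qquad x,y\in S,
\]
where $h:=f\circ\sigma$; since $\sigma$ is a bijection, $h\not\equiv0$. Computing $\int_S f(xyzt)\,d\mu(t)$ by grouping the argument either as $(xy)(zt)$ or as $(x)(yzt)$ yields the associativity identity $f(xy)h(z)=f(x)h(yz)$, and fixing $z_0\in S$ with $h(z_0)\neq0$ and setting $\chi(y):=h(yz_0)/h(z_0)$ gives $f(xy)=f(x)\chi(y)$ for all $x,y\in S$. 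A routine check using $f(x(yy'))=f((xy)y')$ forces $\chi$ to be multiplicative, and $\chi\not\equiv0$ because otherwise $f$ would vanish on $S\cdot S$, which via the original equation forces $f(x)f(y)=0$ everywhere and hence $f\equiv0$.

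Applying $f(xy)=f(x)\chi(y)$ with $y$ replaced by $\sigma(y)t$ and using multiplicativity of $\chi$, the original equation becomes
\[
f(x)\,\chi(\sigma(y))\,I_\chi=f(x)f(y),\qquad I_\chi:=\int_S\chi(t)\,d\mu(t).
\]
Cancelling $f(x)$ at a point where $f(x)\neq0$ gives $f=I_\chi(\chi\circ\sigma)$, and in particular $I_\chi\neq0$ (otherwise $f\equiv0$). Substituting this expression back into $f(xy)=f(x)\chi(y)$ produces $(\chi\circ\sigma)(xy)=(\chi\circ\sigma)(x)\chi(y)$; combined with the fact that $\chi\circ\sigma$ is itself multiplicative (as $\sigma$ is an automorphism) and not identically zero, this forces $\chi\circ\sigma=\chi$. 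Thus $f=I_\chi\chi$ with $\chi\circ\sigma=\chi$, proving (b); part (a) is then immediate since $\int_S f(t)\,d\mu(t)=I_\chi^2$.

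The main obstacle is that $S$ carries no identity: on a monoid one could both extract the multiplicative factor and verify its $\sigma$-invariance at once by setting $y=e$, but in the semigroup setting the associativity identity $f(xy)h(z)=f(x)h(yz)$ together with a careful choice of $z_0$ is needed to replace that shortcut, and the $\sigma$-invariance of $\chi$ must be recovered \emph{a posteriori} by comparing the two forms of $f$.
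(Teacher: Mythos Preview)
Your argument is correct, and it takes a genuinely different route from the paper's. The paper proves (a) first by an independent double-integration trick: substituting $x\mapsto xs$, $y\mapsto xk$ and integrating, it shows that $\int_S f(t)\,d\mu(t)=0$ forces $\int_S f(xk)\,d\mu(k)=0$ for all $x$, hence $f=0$. Part~(a) is then used as an ingredient in (b): replacing $y$ by $yzs$ and integrating in two ways yields, after further integration in $z$, the identity $f(x\sigma(y))=\alpha f(x)f(y)$ with a nonzero $\alpha$, from which $\alpha f=\chi$ is multiplicative and $\chi\circ\sigma=\chi$ follows immediately.

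You invert this logical order: after the substitution $y\mapsto\sigma(y)$ you recognise a Pexider-type equation $\int_S f(xyt)\,d\mu(t)=f(x)h(y)$, extract the factor $\chi$ by the standard associativity argument $f(xy)h(z)=f(x)h(yz)$, and only then return to the original equation to identify $f=I_\chi(\chi\circ\sigma)$ and to deduce $\chi\circ\sigma=\chi$; part~(a) then drops out as the corollary $\int_S f\,d\mu=I_\chi^2\neq0$. Your approach avoids all iterated integrations and is closer in spirit to the classical separation-of-variables treatment of Pexider equations; the price is the extra step checking $\chi\not\equiv0$ via ``$f$ vanishes on $S\cdot S$ $\Rightarrow$ $f=0$''. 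The paper's approach, by contrast, obtains $\chi\circ\sigma=\chi$ for free from the single relation $f(x\sigma(y))=\alpha f(x)f(y)$, at the cost of the heavier integral manipulations needed to reach that relation.
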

	 	\begin{proof}
	 		(a)	If $f=0$ then   $\int_{S}f(t)d\mu(t)=0$. Conversely, if $\int_{S}f(t)d\mu(t)=0$ then,
by replacing $x$ by $xs$ and $y$ by $xk$ in (\ref{04}) and integrating the two members of equation with respect to
	 	 $s$ and $k$ we obtain
	 		\begin{equation}
	 		\label{A1}
	 		\int_{S}\int_{S}\int_{S}f(xs\sigma(xk)t)d\mu(s)d\mu(k)d\mu(t)= \left( \int_{S}f(xk)d\mu(k) \right) ^{2}.
	 		\end{equation}
By using (\ref{04}) the left hand side of (\ref{A1}) can be written as
\begin{equation*}\begin{split}& 		\int_{S}\int_{S}\int_{S}f(xs\sigma(xk)t)d\mu(s)d\mu(k)d\mu(t)\\
&=\int_{S}\int_{S}\int_{S}f(xs\sigma(x)\sigma(k)t)d\mu(s)d\mu(k)d\mu(t)\\
&=\int_{S}\int_{S}\left[ \int_{S}f(xs\sigma(x)\sigma(k)t)d\mu(t)\right] d\mu(s)d\mu(k)\\	 		&=\int_{S}f(xs\sigma(x))d\mu(s)\int_{S}f(k)d\mu(k)\\
&=0.
\end{split}\end{equation*}
	 		In view of  (\ref{A1}) we deduce that
	 		$  \int_{S}f(xk)d\mu(k)=0$ for all $x\in S$, and  from equation (\ref{04}) we get
	 		$\int_{S}f(x\sigma(y)t)d\mu(t)=f(x)f(y)=0$ for all $x,y\in S.$ This implies that $f=0$. \\
(b) Assume that $f\neq0$.  By replacing $y$ by $yzs$ in (\ref{04}) and integrating the result obtained with respect to $s$  we get that
\begin{equation*}\begin{split}&	 		\int_{S}\int_{S}f(x\sigma(yzs)t)d\mu(s)d\mu(t)=\int_{S}\left[  \int_{S}f(x\sigma(y)\sigma(zs)t)d\mu(t)\right] d\mu(s)\\
&\quad\quad\quad\quad\quad\quad\quad\quad\quad\quad\quad\quad\,\,\,\,
=f(x\sigma(y))\int_{S}f(zs)d\mu(s)
\end{split}\end{equation*} and
\begin{equation*}\begin{split}&	 	\int_{S}\int_{S}f(x\sigma(yzs)t)d\mu(s)d\mu(t)=\int_{S}\left[    \int_{S}f(x\sigma(yz)\sigma(s)t)d\mu(t)\right] d\mu(s)\\
&\quad\quad\quad\quad\quad\quad\quad\quad\quad\quad\quad\quad
\,\,\,\,=f(x\sigma(yz))\int_{S}f(s)d\mu(s).
\end{split}\end{equation*}
So,
\begin{equation}\label{key}
f(x\sigma(y))\int_{S}f(zs)d\mu(s)=	f(x\sigma(yz))\int_{S}f(s)d\mu(s)
\end{equation}for all $x,y,z\in S.$
Now, replacing $z$ by $\sigma(k)$  in (\ref{key}) and integrating the result obtained with respect to $k$
we obtain
\begin{equation}\label{09}\begin{split}&	
f(x \sigma(y)) \int_{S}\int_{S}f(\sigma(k)s)d\mu(k)d\mu(s)=	\int_{S}f(x\sigma(y)k)d\mu(k)\int_{S}f(s)d\mu(s)\\
&\quad\quad\quad\quad\quad\quad\quad\quad\quad\quad\quad\quad
\quad\quad\quad\,=f(x)f(y)\int_{S}f(s)d\mu(s).
\end{split}\end{equation}
Since  $\int_{S}f(s)d\mu(s)\neq 0$ and  $f\neq 0$, we get that  $\int_{S}\int_{S}f(\sigma(k)s)d\mu(k)d\mu(s)\neq0$. Hence equation (\ref{09}) can be written as follows
	 		\begin{equation}
	 		\label{333}
	 		f(x\sigma(y) )=\alpha f(x)f(y),
	 		\end{equation}
	 		where $\alpha:= \dfrac{\int_{S}f(s)d\mu(s)}{\int_{S}\int_{S}f(\sigma(k)s)d\mu(k)d\mu(s)}\neq0$. From equation  (\ref{333}) we get 	
$f(x\sigma(yz) )= \alpha f (x)f(yz)=f((x\sigma(y)\sigma(z))
= \alpha^2 f(x)f(y)f(z).$
Since $f\neq0$, then we deduce that
$f(yz)= \alpha f(y)f(z)$ for all $y,z$, which implies that    $\alpha f:=\chi $, where $\chi$ is an exponential.  By using again (\ref{333})   we get
$\chi(x)\chi\circ\sigma(y)=\chi(x\sigma(y))=\chi(x)\chi(y),$
so $\chi\circ \sigma =\chi$.
	 	Now, by substituting  $f=\dfrac{\chi}{\alpha}$  into (\ref{04})  we obtain
	 		$
	 		\dfrac{1}{\alpha}\chi(x)\chi(y) \int_{S}\chi(t)d\mu(t)
	 		= \dfrac{\chi(x) }{\alpha}\dfrac{\chi(y) }{\alpha},$ which implies that  $\dfrac{1}{\alpha}=\int_{S}\chi(t)d\mu(t)$. Finally,  $f=[\int_{S}\chi(t)d\mu(t)] \chi$.
	 		The converse is straightforward verification. This completes the proof.
	 	\end{proof}
		
			In the following we prove some useful lemmas which will be used  in the proof of our first main result (Theorem 3.6).
			
		  \begin{lem}
		 						  	\label{uu}
		 						 Let $f, g :S\longmapsto\mathbb{C}$ be a solution  of  equation (\ref{6}) such that  $f,g$ are linearly independent. Then $\int_{S}f(t)d\mu(t)=0$.
\end{lem}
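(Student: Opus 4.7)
My plan is to argue by contradiction: assume $A:=\int_S f(t)\,d\mu(t)\neq 0$ (write also $B:=\int_S g(t)\,d\mu(t)$), and work toward showing that $f$ and $g$ must be linearly dependent, which would contradict the hypothesis.

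The preparation is to record three easy consequences of \eqref{6}. First, replacing $y$ by $\sigma(y)$ (using $\sigma\circ\sigma=\mathrm{id}$) produces the ``$\sigma$-free'' companion identity
\[
\int_S f(xyu)\,d\mu(u)=f(x)g(\sigma(y))-f(\sigma(y))g(x),\quad x,y\in S,
\]
which I will use to evaluate triple-product integrals of $f$ with the integration variable in the last position. Second, setting $x=y$ in \eqref{6} yields $\int_S f(y\sigma(y)t)\,d\mu(t)=0$ for every $y\in S$. Third, integrating \eqref{6} with respect to $\mu(x)$ gives
\[
A\,g(y)-B\,f(y)=\int_S\int_S f(x\sigma(y)t)\,d\mu(t)\,d\mu(x);
\]
under the assumption $A\neq 0$ this identity expresses $g$ as a linear combination of $f$ and an iterated integral of $f$.

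The main step is to produce a second, genuinely different identity against which that expression can be compared. Mirroring the trick in Proposition \ref{p1}, I would replace $y$ by $yzs$ in \eqref{6} and integrate with respect to $\mu(s)$; since $\sigma(yzs)=\sigma(y)\sigma(zs)=\sigma(yz)\sigma(s)$, the resulting double integral can be computed in two different ways by applying \eqref{6} once with $(x',y')=(x\sigma(y),zs)$ and once with $(x',y')=(x\sigma(yz),s)$. The second grouping produces the terms carrying $A$ and $B$, yielding
\[
B f(x\sigma(yz))-A g(x\sigma(yz))=f(x)\!\int_S\! g(yzs)\,d\mu(s)-g(x)\!\int_S\! f(yzs)\,d\mu(s),
\]
and the companion identity evaluates $\int_S f(yzs)\,d\mu(s)=f(y)g(\sigma(z))-f(\sigma(z))g(y)$.

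The concluding step is to substitute the $A\neq 0$ expression for $g$ into this equation and use the linear independence of $f$ and $g$ to separate the coefficients; the plan is that this forces $g\in\mathbb{C}\cdot f$, contradicting the standing hypothesis and proving $A=0$. The principal obstacle I anticipate is that \eqref{6} supplies formulas only for integrals of $f$, never directly of $g$, so the quantity $\int_S g(yzs)\,d\mu(s)$ has no immediate analogue; I expect to have to eliminate it indirectly---either by a symmetric second substitution, or by swapping $x\leftrightarrow y$ and exploiting the antisymmetry of the right-hand side of \eqref{6}---rather than evaluating it on the nose. Carrying this bookkeeping cleanly through is where the real work of the proof lies.
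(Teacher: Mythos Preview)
Your proposal is a sketch rather than a proof, and the gap you yourself flag is real: you never explain how to eliminate or evaluate $\int_S g(yzs)\,d\mu(s)$, and ``separating coefficients by linear independence'' is not enough, because the identity you derive has \emph{two} unknown $g$-expressions ($g(x\sigma(yz))$ and $\int_S g(yzs)\,d\mu(s)$) appearing with $f(x)$-coefficients that are themselves unknown functions of $(y,z)$. There is no obvious second relation to play off against it that does not reintroduce the same difficulty. More importantly, your target conclusion---that the contradiction hypothesis forces $g\in\mathbb{C}\!\cdot\! f$---is not what actually happens, so even a successful bookkeeping exercise along your lines would likely stall.

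The paper's argument takes a quite different route. From the substitution $y\mapsto y\sigma(s)k$ (integrated over $s,k$) it extracts a \emph{closed} functional equation for $g$ alone,
\[
\int_S g(x\sigma(y)t)\,d\mu(t)=f(x)h(y)-g(x)g(y),
\]
and then determines $h=-\tfrac{\beta^2}{4}f+\beta g$ for a specific constant $\beta$. Combining this with \eqref{6} shows that $G:=g-\tfrac{\beta}{2}f$ satisfies $\int_S G(x\sigma(y)t)\,d\mu(t)=-G(x)G(y)$, whence by Proposition~\ref{p1} one has $G=-\bigl[\int_S\chi\,d\mu\bigr]\chi$ for an exponential $\chi$ with $\int_S\chi\,d\mu\neq 0$. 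The contradiction is then numerical, not structural: the antisymmetry of \eqref{6} forces $\iiint f(k\sigma(s)t)\,d\mu^3=0$ and a direct computation gives $\iiint g(k\sigma(s)t)\,d\mu^3=0$, so $\iiint G(k\sigma(s)t)\,d\mu^3=0$; but the multiplicative equation for $G$ makes this integral equal $-\bigl(\int_S\chi\,d\mu\bigr)^4\neq 0$. Thus under $A\neq 0$ the pair $(f,g)$ is \emph{not} driven to linear dependence; rather, a nonzero exponential combination of $f,g$ is produced whose triple integral must be simultaneously zero and nonzero. Your outline misses this mechanism entirely.
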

\begin{proof}
Suppose that $\int_{S}f(t)d\mu(t)\neq0$. By replacing $y$ by $y\sigma(s)k$  in (\ref{6}) and integrating the result obtained with respect to $s$ and $k$  we obtain
\begin{equation*}\begin{split}&
\int_{S}\int_{S}\int_{S}f(x\sigma(y\sigma(s)k)t)d\mu(t)d\mu(k)d\mu(s)\\	&=f(x)\int_{S}\int_{S}g(y\sigma(s)k)d\mu(k)d\mu(s)-g(x)\int_{S}\int_{S}f(y\sigma(s)k))
d\mu(k)d\mu(s)\\
&=\int_{S}\int_{S}\int_{S}f(x\sigma(y\sigma(s))\sigma(k)t)d\mu(k)d\mu(s)d\mu(t)\\
&=\int_{S}f(x\sigma(y\sigma(s)))d\mu(s)\int_{S}g(k)d\mu(k)-\int_{S}f(k)d\mu(k)\int_{S}g(x\sigma(y\sigma(s)))d\mu(s).
\end{split}\end{equation*} Then,
		 						  		\begin{equation*}\begin{split}&\int_{S}g(k)d\mu(k)[f(x)g(y)- f(y)g(x)]-\int_{S}f(k)d\mu(k)\int_{S}g(x\sigma(y )s)d\mu(s)\\
&=f(x)\int_{S}\int_{S}g(y\sigma(s)k)d\mu(k)d\mu(s)\\
&\quad-g(x)\left[ f(y)\int_{S}g(s)d\mu(s)-g(y)\int_{S}f(s)d\mu(s)\right] ,\end{split}\end{equation*}
		 						  		which can rewritten as follows
		 						  		\begin{equation}
		 						  		\label{n3}
		 						  		\int_{S}f(t)d\mu(t)\left[ \int_{S}g(x\sigma(y )k)d\mu(k)+g(x)g(y)\right] \end{equation}$$=f(x)\left[ -\int_{S}\int_{S}g(y\sigma(k)t)d\mu(k)d\mu(t)+g(y)\int_{S}g(t)d\mu(t)\right] .
		 						  		$$
		 						  	By assumption  $\int_{S}f(t)d\mu(t)\neq0$ then, from equation (\ref{n3})  we get
		 						  		\begin{equation}
		 						  		\label{n4}
\int_{S}g(x\sigma(y )t)d\mu(t)= f(x)h(y)-g(x)g(y),
\end{equation}
where
\begin{equation*}
h(y):= -\dfrac {\int_{S}\int_{S}g(y\sigma(k )s)d\mu(k)d\mu(s)-g(y)\int_{S}g(t)d\mu(t)} {\int_{S}f(t)d\mu(t)}.
\end{equation*}
Now, by substituting (\ref{n4}) into (\ref{n3})  we obtain
\begin{equation*}
f(x)h(y)\int_{S}f(t)d\mu(t)	=f(x)\left[ -\int_{S}h(t)d\mu(t)f(y)+2g(y)\int_{S}g(t)d\mu(t)\right].
\end{equation*}
Since  $\int_{S}f(t)d\mu(t)\neq0$ and $ f\neq0$ we derive from the identity above that
$h=\alpha f+\beta g$,
where $\alpha:=-\dfrac{\int_{S}h(t)d\mu(t)}{\int_{S}f(t)d\mu(t) }$ \;and\; $\beta:=\dfrac{2\int_{S}g(t)d\mu(t)}{\int_{S}f(t)d\mu(t) }$.
Then $$\alpha =-\dfrac{\int_{S}h(t)d\mu(t)}{\int_{S}f(t)d\mu(t) }=-\dfrac{\alpha \int_{S}f(t)d\mu(t)+\beta \int_{S}g(t)d\mu(t)}{\int_{S}f(t)d\mu(t) }=-\alpha-\dfrac{\beta^2}{2}.$$ So, $\alpha=- {\beta^2}/{4}$ and		 						   	$h(y)=-\dfrac{\beta^2}{4} f(y)+\beta g(y).$ \\
Now, equation (\ref{n4}) can be written as  follows
\begin{equation}
\label{132}
\int_{S}g(x\sigma(y )t)d\mu(t)= -g(x)g(y)-\dfrac{\beta^2}{4} f(x)f(y)+\beta f(x)g(y) .
\end{equation}
From (\ref{6}) and (\ref{132}) we obtain
		 						  			 	\begin{equation}
		 						  			 	\label{n5}
		 						  			 	  \int_{S}( g-\dfrac{\beta}{2} f) (x\sigma(y)t)d\mu(t) =-(g-\dfrac{\beta}{2} f) (x ) (g-\dfrac{\beta}{2} f)(y).		 						  			 	\end{equation} for all $x,y\in S.$
By applying Proposition \ref{p1} to (\ref{n5}), and taking into account that $f,g$ is linearly independent, we derive that there exists  an exponential   $\chi$ on $S$ such that  $\chi\circ \sigma=\chi$ and  $ g-\dfrac{\beta}{2} f=-\big[\int_{S}\chi(t)d\mu(t)\big]\chi$.  \\
		 						  			 	    By putting $x=s$ and $y=t$ in (\ref{n3}) and integrating the result obtained with respect to $s$ and $r$ we get
		 						  			 	   \begin{equation}
		 						  			 	   \label{ss}
		 						  			 	   \int_{S}\int_{S}\int_{S}g(s\sigma(t)k)d\mu(s)d\mu(k)d\mu(t)=0.
		 						  			 	   \end{equation}   By replacing $x$ by $k$ and $y$ by $s$ in (\ref{6}) and integrating the result obtained with respect to $k$ and $s$ we obtain
		 						  			 	   \begin{equation}
		 						  			 	   \label{24}  \int_{S}\int_{S}\int_{S}f(k\sigma(s)t)d\mu(k)d\mu(s)d\mu(t) \end{equation}$$= \int_{S}f(k)d\mu(k)\int_{S}g(s)d\mu(s)-\int_{S}f(s)d\mu(s)\int_{S}g(k)d\mu(k)=0.
		 						  			 	   $$ Now,  putting $x=k$ and $s=y$ in (\ref{n5}), integrating the result obtained with respect to $k$ and $s$  and using (\ref{ss})  and (\ref{24}) we obtain
		 						  			 	   $$  \int_{S} \int_{S}\int_{S}(g-\dfrac{\beta}{2} f)(k\sigma(s)t)d\mu(s)d\mu(k)d\mu(t)d\mu(s)=-\left( \int_{S}\chi(s)d\mu(s)\right) ^4=0,$$
which contradicts that $\int_{S}\chi(s)d\mu(s)\neq0$.  Thus     $\int_{S}f(t)d\mu(t)=0$.
\end{proof}
\begin{rem}
If $S$ is a monoid and $f,g$ is a solution of (\ref{6}) we easily obtain $\int_{S}f(t)d\mu(t)=0$. In fact, by putting  $x=y=e$ in (\ref{6}) we get $\int_{S}f(t)d\mu(t)= f(e)g(e)-f(e)g(e)=0$.
		 						  	\end{rem}
		 						 \begin{lem}
		 						 		\label{r8}
		 						 		Let    $f, g :S\longmapsto\mathbb{C}$ be a solution  of the functional equation (\ref{6}) such that $f$ and $g$ are linearly independent. Then  we have
		 						 		\begin{equation}
		 						 		\label{08}
		 						 	f(x\sigma(y))\int_{S}\int_{S}g(\sigma(s)k)d\mu(k)d\mu(s)	\end{equation}$$= \big[f(x)g(y)-f(y)g(x)\big]\int_{S}g(k)d\mu(k)+g(x\sigma(y))\int_{S}\int_{S}f(\sigma(s)k)d\mu(k)d\mu(s)$$
		 						 for all $x,y\in S.$	
		 						 \end{lem}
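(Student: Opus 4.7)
The plan is to evaluate a single triple integral in two different ways and equate the outcomes. Let
\[
I := \int_{S}\int_{S}\int_{S} f\bigl(x\sigma(y)\,s\,\sigma(k)\,t\bigr)\,d\mu(t)\,d\mu(k)\,d\mu(s),
\]
which is essentially designed so that both factors $f(x\sigma(y))$ and $g(x\sigma(y))$ appearing in the target identity are produced on one side, while the Kannappan–Sine difference $f(x)g(y)-f(y)g(x)$ is produced on the other.

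First, I would evaluate $I$ using the substitution that produces the left-hand side of the identity. Since $\sigma$ is an involutive automorphism, $\sigma(\sigma(s)k)=s\sigma(k)$, so $x\sigma(y)s\sigma(k)t = (x\sigma(y))\sigma(\sigma(s)k)\,t$. Applying (\ref{6}) with $x$ replaced by $x\sigma(y)$ and $y$ replaced by $\sigma(s)k$ gives
\[
\int_{S} f\bigl(x\sigma(y)s\sigma(k)t\bigr)\,d\mu(t) = f(x\sigma(y))\,g(\sigma(s)k) - f(\sigma(s)k)\,g(x\sigma(y)).
\]
Integrating with respect to $k$ and $s$ yields
\[
I = f(x\sigma(y))\!\!\int_{S}\!\!\int_{S} g(\sigma(s)k)\,d\mu(k)d\mu(s) - g(x\sigma(y))\!\!\int_{S}\!\!\int_{S} f(\sigma(s)k)\,d\mu(k)d\mu(s).
\]

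Second, I would reparse the same integrand as $\bigl(x\sigma(y)s\bigr)\sigma(k)\,t$ and apply (\ref{6}) with $x$ replaced by $x\sigma(y)s$ and $y$ replaced by $k$, obtaining
\[
\int_{S} f\bigl(x\sigma(y)s\sigma(k)t\bigr)\,d\mu(t) = f(x\sigma(y)s)\,g(k) - f(k)\,g(x\sigma(y)s).
\]
Integrating in $s$ and $k$ and using Lemma \ref{uu} to kill the term $\int_{S} f(k)\,d\mu(k)=0$, together with a direct application of (\ref{6}) to compute $\int_{S} f(x\sigma(y)s)\,d\mu(s) = f(x)g(y)-f(y)g(x)$, gives
\[
I = \bigl[f(x)g(y)-f(y)g(x)\bigr]\int_{S} g(k)\,d\mu(k).
\]
Equating the two expressions for $I$ and rearranging immediately produces (\ref{08}).

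The one point requiring a bit of care is the algebraic manipulation $x\sigma(y)s\sigma(k)t = x\sigma(y)\sigma(\sigma(s)k)t$, which crucially uses that $\sigma$ is an involutive \emph{automorphism} (so $\sigma$ preserves order of composition and $\sigma\circ\sigma=\mathrm{id}$); everything else is bookkeeping and a direct appeal to the hypothesis $\int_{S} f(t)\,d\mu(t)=0$ from Lemma \ref{uu}. I expect no real obstacle beyond verifying this rewriting carefully, since the construction of $I$ is tailor-made so that applying (\ref{6}) in two different ways automatically produces the three required double-integral coefficients.
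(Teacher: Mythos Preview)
Your argument is correct and is essentially the same as the paper's own proof: the paper also evaluates the triple integral $\int_{S}\int_{S}\int_{S} f(x\sigma(y)k\sigma(s)t)\,d\mu(t)\,d\mu(s)\,d\mu(k)$ in two ways (phrasing it as the substitution $x\mapsto x\sigma(y\sigma(k))$, $y\mapsto s$ in (\ref{6})), applies (\ref{6}) at the two different break points, and then invokes Lemma~\ref{uu} to drop the $\int_{S} f(k)\,d\mu(k)$ term. Up to the relabeling $s\leftrightarrow k$ of the dummy integration variables, your computation and the paper's coincide line by line.
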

		 						 \begin{proof}
		 					 	Replacing  $x$ by $x\sigma(y\sigma(k))$ and $y$ by $s$ in (\ref{6}) and integrating the result obtained with respect to $k$ and $s$ we get
\begin{equation*}\begin{split}&		 						  \int_{S}\int_{S}\int_{S}f(x\sigma(y\sigma(k))\sigma(s)t)d\mu(t)d\mu(s)d\mu(k)\\  &=\int_{S}f(x\sigma(y)k))d\mu(k)\int_{S}g( s)d\mu(s)-\int_{S}f(s)d\mu(s)\int_{S}g(x\sigma(y)k)d\mu(k)\\
&=\int_{S}\int_{S}\int_{S}f(x\sigma(y)\sigma(\sigma(k)s)t)d\mu(t)d\mu(s)d\mu(k)\\
&=f(x\sigma(y))\int_{S}\int_{S}g(\sigma(k)s)d\mu(s)d\mu(k)-g(x\sigma(y))\int_{S}\int_{S}f(\sigma(k)s)d\mu(s)d\mu(k).\end{split}\end{equation*}
Since, from Lemma \ref{uu}: $\int_{S}f(s)d\mu(s)=0$, then we get the desired result.
\end{proof}	  	
\begin{lem}
\label{r10}
Let $f, g :S\longmapsto\mathbb{C}$ be a solution  of the functional equation (\ref{6}) such that $\{f, g\}$ is linearly independent. Then we have the following. \\
		 						 	(i)  $\int_{S}g(s)d\mu(s)\neq0$. \\
		 						 	(ii ) If $\int_{S}\int_{S}g(\sigma(s)k)d\mu(s)d\mu(k)=0
		 						 	$ \text{then} $ \int_{S}\int_{S}f(\sigma(s)k)d\mu(s)d\mu(k)\neq0$.
\end{lem}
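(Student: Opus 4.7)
The plan is to prove (ii) as an immediate consequence of (i), and to prove (i) by contradiction using Lemma \ref{r8} together with a fresh integration of (\ref{6}) that exploits Lemma \ref{uu}. Write
\begin{equation*}
A := \int_S\!\!\int_S g(\sigma(s)k)\,d\mu(s)d\mu(k),\ \ B := \int_S g(k)\,d\mu(k),\ \ C := \int_S\!\!\int_S f(\sigma(s)k)\,d\mu(s)d\mu(k),
\end{equation*}
so that Lemma \ref{r8} reads $A\,f(x\sigma(y)) - C\,g(x\sigma(y)) = B\,[f(x)g(y) - f(y)g(x)]$ for all $x,y\in S$.

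For the reduction of (ii) to (i): if $A = 0$ and $C = 0$, the identity above collapses to $B\,[f(x)g(y) - f(y)g(x)] = 0$. Linear independence of $\{f, g\}$ yields $(x_0, y_0) \in S \times S$ with $f(x_0)g(y_0) - f(y_0)g(x_0) \neq 0$, forcing $B = 0$, which contradicts (i). Hence $C \neq 0$.

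For (i), suppose for contradiction that $B = 0$. Replacing $y$ by $\sigma(z)y$ in (\ref{6}), using the automorphism property $\sigma(\sigma(z)y) = z\sigma(y)$, and applying (\ref{6}) a second time with $(x',y') := (xz,y)$ to the rewritten integrand, then integrating over $z$, one obtains
\begin{equation*}
g(y)\!\int_S\! f(xz)\,d\mu(z) - f(y)\!\int_S\! g(xz)\,d\mu(z) = f(x)\!\int_S\! g(\sigma(z)y)\,d\mu(z) - g(x)\!\int_S\! f(\sigma(z)y)\,d\mu(z).
\end{equation*}
Integrating this identity with respect to $y$ and invoking Lemma \ref{uu} (i.e.\ $\int_S f\,d\mu = 0$) yields the pointwise identity
\begin{equation*}
B\!\int_S\! f(xz)\,d\mu(z) = A\,f(x) - C\,g(x), \qquad x\in S.
\end{equation*}
With $B = 0$ this becomes $Af = Cg$, and linear independence of $\{f, g\}$ forces $A = 0$ and $C = 0$.

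The remaining and hardest step is to rule out $A = B = C = 0$, because Lemma \ref{r8} is vacuous in that regime. Following the endgame of Lemma \ref{uu}, the plan is to iterate substitutions $y \mapsto \sigma(z)y$, $x \mapsto x\sigma(z)$ and their left--right duals in (\ref{6}), accumulating vanishing identities of the form $\int_S f(\cdot)\,d\mu = \int_S g(\cdot)\,d\mu = 0$ for progressively richer translates, until one can exhibit a nontrivial linear combination $h$ of $f$ and $g$ satisfying the multiplicative Kannappan equation (\ref{04}); Proposition \ref{p1} then writes $h = [\int_S\chi\,d\mu]\chi$ for some exponential $\chi$ with $\chi\circ\sigma = \chi$, and the accumulated vanishing conditions force $\int_S\chi\,d\mu = 0$, contradicting Proposition \ref{p1}(a). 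Identifying the correct auxiliary combination $h$ in this degenerate case is the main technical hurdle.
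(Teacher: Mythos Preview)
Your reduction of (ii) to (i) via Lemma \ref{r8} is correct and is exactly what the paper does.

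For (i), your identity
\[
B\int_S f(xz)\,d\mu(z)=A\,f(x)-C\,g(x)
\]
is correct, and under $B=0$ it indeed yields $A=C=0$. The gap is the case $A=B=C=0$: your outlined plan (iterate substitutions until some nontrivial combination $h$ of $f,g$ satisfies (\ref{04}), then invoke Proposition \ref{p1}) is not carried out, and there is no obvious reason such an $h$ should emerge. In fact the paper never enters this regime at all.

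The paper's trick is to use the substitution $y\mapsto yk$ (rather than your $y\mapsto\sigma(z)y$), so that after rewriting $\sigma(yk)=\sigma(y)\sigma(k)$ and applying (\ref{6}) to the pair $(x\sigma(y),k)$ one gets
\[
f(x)\int_S g(yk)\,d\mu(k)-g(x)\int_S f(yk)\,d\mu(k)
= f(x\sigma(y))\int_S g(k)\,d\mu(k)-g(x\sigma(y))\int_S f(k)\,d\mu(k).
\]
Under the contradiction hypothesis $B=\int_S g\,d\mu=0$, together with $\int_S f\,d\mu=0$ from Lemma \ref{uu}, the right-hand side vanishes identically. Linear independence of $\{f,g\}$ (applied in the variable $x$, for each fixed $y$) then forces $\int_S f(yk)\,d\mu(k)=\int_S g(yk)\,d\mu(k)=0$ for every $y\in S$. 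In particular $\int_S f(x\sigma(y)t)\,d\mu(t)=0$, so (\ref{6}) gives $f(x)g(y)=f(y)g(x)$, contradicting independence. Replacing your substitution by this one closes (i) in one step and makes the $A=B=C=0$ discussion unnecessary.
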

\begin{proof} (i) In view of  Lemma \ref{uu} we have $\int_{S}f(s)d\mu(s)=0$.
Assume  that $\int_{S}g(s)d\mu(s)=0$. Making the substitution $(x,yk)$  in (\ref{6}) and integrating the result obtained with respect to $k$ we get  that
\begin{equation*}\begin{split}&			    		\int_{S}\int_{S}f(x\sigma(yk))t)d\mu(k)d\mu(t)\\
&=f(x )\int_{S}g(yk)d\mu(k)-g(x)\int_{S}f(yk)d\mu(k) \\
&=\int_{S}\int_{S}f(x\sigma(y)\sigma(k))t)d\mu(k)d\mu(t)\\
&=f(x\sigma(y))\int_{S}g(k)d\mu(k)-g(x\sigma(y))\int_{S}f(k)d\mu(k)=0,
\end{split}\end{equation*}
which implies that $f(x )\int_{S}g(yk)d\mu(k)-g(x)\int_{S}f(yk)d\mu(k)=0,  x,y\in S.$ As $\{f, g\}$ is linearly independent, we get  that  $\int_{S}f(yk)d\mu(k)=\int_{S}g(yk)d\mu(k)=0$ for all $y\in S$. Combining this with (\ref{6}) we obtain  $f(x)g(y)-f(y)g(x)=0$ for all $x,y\in S.$ This contradicts  the fact that $\{f, g\}$ is linearly independent. So, we have (i).\\
		 						    	(ii)  Assume  that  $\int_{S}\int_{S}f(\sigma(s)k)d\mu(s)d\mu(k)=0$. \\Since $\int_{S}\int_{S}g(\sigma(s)k)d\mu(s)d\mu(k)=0$ by hypothesis, then  equation (\ref{08})  implies
		 						    	$
		 						    	\int_{S}g(t)d\mu(t) [f(x)g(y)-f(y)g(x)]=0$ for all $x,y\in S.
		 						    	$
Since $\int_{S}g(t)d\mu(t)\neq0$, we get that
		 						    	$
		 						    	f(x)g(y)=  f(y)g(x)
		 						    	$ for all $x,y\in S$.
This contradicts that $f $ and $g$ are linearly independent. The Lemma \ref{r10} is proved.
\end{proof}		 						   	
Now we are ready to find  the solutions of the integral Kannappan-Sine subtraction law (\ref{6}) on semigroups. The following theorem is the first main result.
\begin{thm}\label{t01}
The solutions $f,g :S\longrightarrow \mathbb{C}$ of    integral Kannappan-Sine subtraction law (\ref{6}) can be listed as follows.\\
(1) $f=0$ and $g$ arbitrary.\\
(2)  $f$ is any non-zero function such that $\int_{S}f(xyt)d\mu(t)=0$ for all $x,y\in S$  and $g=k f$, where $k\in \mathbb{C}$ is a constant.\\
(3) There exist  constants $\gamma, b \in \mathbb{C}^{*}, c  \in \mathbb{C} \setminus \{\pm 1\} $ and an exponential  $\chi$ on $S$  with $\chi \neq \chi\circ\sigma$, $\int_{S}\chi(t)d\mu(t)=\dfrac{-2b}{1+c}$ and $\int_{S}\chi\circ\sigma(t)d\mu(t)=\dfrac{  2b}{1-c}$ such that
\begin{equation*}
f=  \dfrac{\chi+\chi\circ\sigma}{2\gamma} +c\dfrac{\chi-\chi\circ\sigma}{2\gamma}   \; and \;  g=b (\chi-\chi\circ\sigma ).
\end{equation*}
(4) There exist  constants $\beta, b \in \mathbb{C}^{*}, c  \in \mathbb{C}   $ and an exponential  $\chi$ on
		 						   	$S$ with
		 						   	$\chi \neq \chi\circ\sigma$ and $\int_{S}\chi(t)d\mu(t)=  \int_{S}\chi\circ\sigma(t)d\mu(t)=1/\beta$ such that
		 						   	\begin{equation*}
		 						   	f=b
		 						    (\chi-\chi\circ\sigma) \hspace{0.4cm} and \hspace{0.4cm}  g=  \dfrac{\chi+\chi\circ\sigma}{2\beta}+c\dfrac{\chi-\chi\circ\sigma}{2\beta}.
		 						   	\end{equation*}
		 						   	(5) There exist  constants $\alpha,\delta, b  \in \mathbb{C}^{*}, c\in \mathbb{C}  $    and an exponential
		 						   		$\chi$  on	$S$
		 						   		with  $ \alpha \neq\pm (2b \delta +\alpha c)$,  $\chi \neq \chi\circ\sigma$, $	\int_{S}\chi (t)d\mu(t)=	\dfrac {2b }{\alpha (1+c)+2b\delta   } $ and\\ $ 	 \int_{S}\chi\circ \sigma(t)d\mu(t)=\dfrac {2b }{\alpha (c-1)+2b\delta   }$
		 						   	 such that
		 						   	\begin{equation*}
		 						    f= \alpha   \dfrac{\chi+\chi\circ\sigma}{2\delta }+  (\alpha c+2b\delta) \dfrac{\chi-\chi\circ\sigma}{2\delta }   \hspace{0.2cm} and \hspace{0.2cm}  g=   \dfrac{\chi+\chi\circ\sigma}{2\delta} +c  \dfrac{\chi-\chi\circ\sigma}{2\delta}.
\end{equation*}
(6) There exist a constant $\gamma\in \mathbb{C}^{*} $, an exponential  $\chi$ on $S$ and a non-zero
function $\phi_{\chi}$: $S\longrightarrow \mathbb{C}$ with $\chi\circ\sigma=\chi, \phi_\chi\circ\sigma=$
$-\phi_\chi$,		 						   	$\int_S\chi(t)d\mu(t)\neq 0$ and $\int_S \phi_\chi (t)d\mu(t)=(\int_S \chi(t)d\mu(t))^2$ such that
$
		 						   	f=\dfrac{1}{\gamma} \left(  {\chi} -\dfrac{1}{\int_S \chi(t)d\mu(t)} \phi_{\chi}\right) $ and $ g= \phi_{\chi}
		 						 $.\\
		 						   	(7) There exist  constants $ \alpha,  c \in \mathbb{C}$ and $ \delta \in \mathbb{C}^{*}$, an exponential  $\chi$ on $S$,  a non-zero
		 						   	function $\phi_{\chi}$: $S\longrightarrow \mathbb{C}$
		 						    with $\alpha c+\delta\neq0$, $\chi\circ\sigma$
		 						   $=\chi$,  $\phi_\chi \circ\sigma=-\phi_\chi,$
		 						   	$\int_S \chi(t)d\mu(t)=\dfrac{1}{\alpha c+\delta} $ and $\int_S \phi_\chi (t)d\mu(t)=\dfrac{-\alpha}{(\alpha c+\delta)^2} $ such that
		 						  $$
		 						   	f=   \dfrac{1}{\delta} [  \alpha\chi+ (\alpha c +\delta) \phi_{\chi}]\;\;and \;\;  g=   \dfrac{1}{\delta} \left( \chi+ c \phi_{\chi}\right).
		 						   $$\\(8) There exist a constant $  c\in \mathbb{C} $, an exponential  $\chi$ on $S$, a non-zero
		 						   function $\phi_{\chi}$: $S\longrightarrow \mathbb{C}$ with $\chi\circ\sigma=\chi, \phi_\chi\circ\sigma=$
		 						   $-\phi_\chi, $ and
		 						   $\int_S\chi(t)d\mu(t)\neq 0$, $\int_S \phi_\chi (t)d\mu(t)=0$ and
		 						   $
		 						   f=  \phi_{\chi},$  $ g= \int_S \chi(t)d\mu(t)(\chi+c\phi_{\chi})
		 						   $.\\
		 						   	Moreover if $S$ is a topological semigroup and $f,g \in C(S)$, then $ \chi, \chi\circ\sigma \in C(S)$ and  $\phi_\chi\in C(S)$.
		 						   	 \end{thm}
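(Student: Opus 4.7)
The plan is to reduce equation (\ref{6}) to the ordinary sine subtraction law (\ref{7}) via Lemma \ref{r8}, then invoke the known classification from \cite[Theorem 3.1]{d}, and finally substitute the resulting candidates back into (\ref{6}) to pin down the integral constraints that distinguish cases (3)--(8). First I would dispose of the two degenerate cases. If $f\equiv 0$, the equation holds for any $g$, giving case (1). If $\{f,g\}$ is linearly dependent with $f\neq 0$, then $g=kf$ for some scalar $k$ and (\ref{6}) collapses to $\int_S f(x\sigma(y)t)\,d\mu(t)=0$ for all $x,y\in S$; since $\sigma$ is an involutive automorphism, substituting $\sigma(y)$ for $y$ gives precisely case (2).

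From here on assume $\{f,g\}$ is linearly independent. Lemma \ref{uu} yields $\int_S f(t)\,d\mu(t)=0$ and Lemma \ref{r10}(i) yields $B:=\int_S g(t)\,d\mu(t)\neq 0$. Writing
$$A:=\int_S\int_S g(\sigma(s)k)\,d\mu(s)d\mu(k),\qquad C:=\int_S\int_S f(\sigma(s)k)\,d\mu(s)d\mu(k),$$
Lemma \ref{r8} becomes $A\,f(x\sigma(y)) = B\,[f(x)g(y)-f(y)g(x)] + C\,g(x\sigma(y))$. The natural next step is to split on whether $A$ vanishes. If $A\neq 0$, a short computation shows that $F:=Af-Cg$ and $G:=(B/A)g$ satisfy the sine subtraction law $F(x\sigma(y))=F(x)G(y)-F(y)G(x)$. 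If $A=0$, Lemma \ref{r10}(ii) forces $C\neq 0$ and the identity collapses to $g(x\sigma(y))=g(x)\tilde G(y)-g(y)\tilde G(x)$ with $\tilde G:=(B/C)f$, which is again a sine subtraction law, now with $g$ playing the role of ``sine''.

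In either subcase I would then invoke \cite[Theorem 3.1]{d} (cf.\ also \cite[Theorem 4.3]{as}) to enumerate the candidate pairs: they involve either two distinct exponentials $\chi$ and $\chi\circ\sigma$, or a single exponential $\chi$ with $\chi\circ\sigma=\chi$ paired with a function $\phi_\chi$ of type (\ref{111}) satisfying $\phi_\chi\circ\sigma=-\phi_\chi$. Inverting the change of variables writes $f$ and $g$ as explicit linear combinations of $\chi$, $\chi\circ\sigma$ and $\phi_\chi$; plugging these candidates back into the original integrated equation (\ref{6}) and matching coefficients of the linearly independent building blocks on both sides produces the integral identities for $\int_S \chi\,d\mu$, $\int_S \chi\circ\sigma\,d\mu$ and $\int_S \phi_\chi\,d\mu$ recorded in (3)--(8). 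Continuity of $\chi$, $\chi\circ\sigma$ and $\phi_\chi$ on a topological semigroup then follows from that of $f,g$ via these explicit formulas.

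The main obstacle I expect is the bookkeeping in the final step: tracking which subcase ($A\neq 0$ versus $A=0$) and which sine-subtraction solution family gives rise to which of the eight listed forms, verifying that the scalars $b,c,\alpha,\beta,\gamma,\delta$ are normalized so that the resulting $(f,g)$ actually satisfies (\ref{6}), and checking that the side conditions ($\chi\neq\chi\circ\sigma$, the integral identities, and the non-vanishing of the various linear combinations of $\chi,\chi\circ\sigma,\phi_\chi$) are exactly what is needed to guarantee that the candidate pair is linearly independent. Care will also be required to ensure that the two subcases do not produce overlapping families and to discard spurious dependent pairs.
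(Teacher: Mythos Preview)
Your plan is essentially the paper's own proof: reduce via Lemma~\ref{r8} to a sine subtraction law, classify, and substitute back into (\ref{6}) to extract the integral conditions; your single $A\neq 0$ case with $F=Af-Cg$ neatly merges what the paper splits into subcases 2.1 ($C=0$) and 2.2 ($C\neq 0$). One correction: the classification you need is for the \emph{subtraction} law $F(x\sigma(y))=F(x)G(y)-F(y)G(x)$, so you should invoke \cite[Theorem~4.2]{vb} (as the paper does) rather than \cite[Theorem~3.1]{d}, which treats the addition law.
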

		 							 \begin{proof}
		 							 	 If $f=0$ then $g$ can be chosen arbitrary. The solution occurs in part (1). For the rest of the proof we assume that  $f\neq0$.
		 							 	 If $f$ and $g$ are linearly dependent, then there exists a constant $k \in \mathbb{C}$ such that $g=k f$ and then Eq. (\ref{6}) reduces to $\int_{S}f(xyt)d\mu(t)=0$ for all $x,y \in S$.  This proves the solution family (2). From now on we assume that $f$ and $g$ are linearly independent. By Lemma \ref{r10} (i) we have $\int_{S}g(t)d\mu(t)\neq0$. We will discuss two cases:\\{Case 1}: Suppose $\int_{S}\int_{S}g(\sigma(s)k )d\mu(s)d\mu(k)=0$, then from Lemma \ref{r10} (ii) we have  $\int_{S}\int_{S}f(\sigma(s)k )d\mu(s)d\mu(k)\neq0$ and therefore Eq.(\ref{08}) becomes
\begin{equation}
\label{r88}
g(x\sigma(y))= \gamma g(x)f(y)- \gamma f(x)g(y),  \; x,y \in S,
\end{equation}
where $\gamma:=\dfrac{\int_{S}g(k)d\mu(k)}{\int_{S}\int_{S}f(\sigma(s)k )d\mu(s)d\mu(k)} \neq0$. From  (\ref{r88}) we read that the pair $(g, \gamma f)$ satisfies the sine subtraction law (\ref{7}), so according to \cite[Theorem 4.2]{vb}  and taking into account that $f$ and $g$ are linearly independent,  we infer that we have only  the last possibilities (c) and (d) of \cite[Theorem 4.2]{vb}:
		 							 	\\(i)  There exist constants $b\in \mathbb{C}^{*}, c \in \mathbb{C}$ and  an exponential  $\chi $ on $S$, with $\chi \circ \sigma \neq \chi$  such that $g= b(\chi-\chi\circ\sigma)$ \;\;and \;\; $\gamma f=\dfrac{\chi +\chi\circ\sigma}{2 }+c  \dfrac{\chi-\chi\circ\sigma}{2 }.$
Substituting this in (\ref{6}) we get after some rearrangement
$
\left( 	(1+c)\int_S\chi(t)d\mu(t)+2 b\right) \chi(xy)+\left( (1-c )\int_{S}\chi\circ\sigma(t)d\mu(t)-2 b\right) \chi\circ \sigma(xy)=0		 							 	 	$ for all $ x, y \in S.$
From \cite[Theorem 3.18 ]{g} we deduce that		 							 	 		\begin{equation}
\label{rrr}		 							 	 	  	(1+c)\int_{S}\chi(t)d\mu(t)=-2 b\;\;\;  and\; \;\;  	(1-c )\int_{S}\chi\circ\sigma(t)d\mu(t)=2 b.  \end{equation}
As $b \neq0$ we get from (\ref{rrr}) that $c \neq \pm1$. So,		 							 	 	  		\begin{equation*}
		 							 	 	  	 \int_{S}\chi(t)d\mu(t)=\dfrac{ -2b}{1+c }\;\;\;  and \;\;\;    \int_{S}\chi\circ\sigma(t)d\mu(t)=\dfrac{ 2b}{1-c }.  \end{equation*}
The solution occurs in part (3).
\\(ii) There exist a  constant $c\in \mathbb{C}$, an exponential  $\chi$ on $S$ and a function $\phi_{\chi}$ satisfying (\ref{111}) such that $\chi=\chi\circ\sigma$, $\phi_{\chi}\circ\sigma=-\phi_{\chi}\neq0$,
		 							 	 	  	 $g=   \phi_{\chi} \;and \hspace{0.4cm}  \gamma f= \chi+c\phi_{\chi}.$
		 							 	 	  	  Furthermore, $-c\int_{S}\phi_{\chi}(t)d\mu(t)=\int_{S}{\chi}(t)d\mu(t)$ and $\int_{S}\phi_{\chi}(t)d\mu(t)=\int_{S}g(t)d\mu(t)\neq0$ because $\int_{S}f(t)d\mu(t)=0$ by Lemma \ref{uu}. By substituting the new expression of $f$ and $g$ into (\ref{6}) and using that the set $\{\chi, \phi_{\chi}\}$ is linearly independent \cite[Lemma 5.1(b)]{d} we get that $\int_S \chi(t)d\mu(t)\neq 0$,  $c\neq0$ and $c=\dfrac{-1}{\int_{S}\chi(t)d\mu(t)}$ and then $\int_{S}\phi_{\chi}(t)d\mu(t)=(\int_{S}{\chi}(t)d\mu(t))^{2}$. Then  we conclude that we deal with case   (6).\\
		 							 	 	{Case 2}: Suppose $\int_{S}\int_{S}g(\sigma(s)k )d\mu(s)d\mu(k)\neq0$.\\
		 							 	 	 \underline{Subcase 2.1}. $\int_{S}\int_{S}f(\sigma(s)k )d\mu(s)d\mu(k)=0$. Then (\ref{08}) reduces to
		 							 	 	 \begin{equation}
		 							 	 	 \label{rtt}
		 							 	 	 f(x\sigma(y))= \beta f(x)g(y)-\beta f(y)g(x),  \; x,y \in S,
		 							 	 	 \end{equation}
		 							 	 	 where $\beta:=\dfrac{\int_{S}g(k)d\mu(k)}{\int_{S}\int_{S}g(\sigma(s)k )d\mu(s)d\mu(k)} \neq0$. From  (\ref{rtt}) we read that the pair $(f, \beta g)$ satisfies the sine subtraction law (\ref{7}), so according to \cite[Theorem 4.2]{vb}  and taking into account that $f$ and $g$ are linearly independent,  we infer that we haveonly  the following possibilities:\\		 							 	 	
(i)  There exist constants $b\in \mathbb{C}^{*}, c \in \mathbb{C}$ and  an exponential  $\chi $ on $S$ with $\chi \circ \sigma \neq \chi$   such that $f= b(\chi-\chi\circ\sigma)$ and $ \beta g=\dfrac{\chi +\chi\circ\sigma}{2 }+c  \dfrac{\chi-\chi\circ\sigma}{2 }.$
		 							 	 	 By substituting  the new expression of $f$ and $g$ into (\ref{6}) we get  after simplification that
	$$\chi(x\sigma(y))\left[ b \int_S \chi(t)d\mu(t)-\dfrac{b}{\beta} \right] +\chi(\sigma(x)y)\left[ -b \int_S \chi(\sigma(t))d\mu(t)+\dfrac{b}{\beta} \right] =0$$ for all $x,y\in S,$
		 							 	 	 which implies, by \cite[Theorem 3.18]{g}, that
		 							 	 	 \begin{equation*}
		 							 	 	  \beta\int_{S}\chi\circ \sigma(t)d\mu(t)-1=0\hspace{0.3cm}  and \hspace{0.3cm}  1-	 \beta\int_{S}\chi (t)d\mu(t) =0.  \end{equation*}
		 							 	 	This gives $\int_{S}\chi\circ \sigma(t)d\mu(t)= \int_{S}\chi(t)d\mu(t)=1/\beta $, and  we deal with case   (4).\\		 							 	 	
(ii) There exist a  constant $c\in \mathbb{C}$, an exponential  $\chi$ on $S$, and $\phi_{\chi}$ solution of (\ref{111}) such that $\chi=\chi\circ\sigma$, $\phi_{\chi}\circ\sigma=-\phi_{\chi}\neq0$, and
		 							 	 	$f=   \phi_{\chi} \;and  \;   \beta g= \chi+c\phi_{\chi}.$
		 							 	 	 Furthermore,  since by Lemma \ref{uu}: $\int_{S}f(t)d\mu(t)=0$ then $\int_{S}\phi_{\chi}(t)d\mu(t)=0$, and by inserting these forms of $f,g$ into  (\ref{6}) and using  \cite[Lemma 5.1(b)]{d}
		 							 	 	  we get that  $\dfrac{1}{\beta}=\int_{S}{\chi}(t)d\mu(t)\neq0$. Then  we deal with case (8). \\
		 							 	 	 \underline{Subcase 2.2}: If $\int_{S}\int_{S}f(\sigma(s)k )d\mu(s)d\mu(t)\neq0$  then Eq. (\ref{08}) becomes
		 							 	 	 	\begin{equation}
		 							 	 	 	\label{rvv}
		 							 	 	 	 f(x\sigma(y))= \delta f(x)g(y)-\delta f(y)g(x)+\alpha   g(x\sigma(y)),  \; x,y \in S,
		 							 	 	 	\end{equation}
		 							 	 	 where $\delta:=\dfrac{\int_{S}g(k)d\mu(k)}{\int_{S}\int_{S}g(\sigma(s)k )d\mu(s)d\mu(k)} \neq0$ (see Lemma \ref{r10}(i))  and \\$ \alpha:=\dfrac{\int_{S}\int_{S}f(\sigma(s)k )d\mu(s)d\mu(k)}{\int_{S}\int_{S}g(\sigma(s)k )d\mu(s)d\mu(k)} \neq0.$\\
		 							 	 	 Now, we can reformulate  Eq. (\ref{rvv})  as follows
		 							 	 	 $$
		 							 	 	 \left(  f-\alpha g\right)(x\sigma(y)))=\delta \left(  f-\alpha g\right)(x)g(y)-\delta  \left(  f-\alpha g\right)(y) g(x),\; x, y \in S.
		 							 	 	$$
		 					Then, the pair $ \left(   f-\alpha g, \delta g\right) $ satisfies the sine subtraction law (\ref{7}).  So, in view of \cite[Theorem 4.2]{vb} and using the assumption that $\{f,g\}$ is   independent, we infer that we have only the following two possibilities: \\ (i)  There exist constants $b\in \mathbb{C}^{*}, c \in \mathbb{C}$ and  an exponential  $\chi $ on $S$, with $\chi \circ \sigma \neq \chi$ such that
		 						 \begin{equation*}
		 						 f-\alpha g=b(\chi -\chi\circ \sigma)\;\;\; and \;\;\; \delta g= \dfrac{\chi +\chi\circ \sigma}{2}+c\dfrac{\chi -\chi\circ \sigma}{2},
		 						 \end{equation*}
		 						 which implies that
		 						 \begin{equation*}
		 						  f= \alpha   \dfrac{\chi+\chi\circ\sigma}{2\delta }+  (\alpha c+2b\delta) \dfrac{\chi-\chi\circ\sigma}{2\delta }   \hspace{0.4cm}and \hspace{0.4cm}    g=\dfrac{\chi+\chi\circ\sigma}{2 \delta}+c  \dfrac{\chi-\chi\circ\sigma}{2\delta }.
		 						 \end{equation*}  Combining this with (\ref{6})  we  find   after some computations that
		 						 $$
		 						 \left( 2 b-(\alpha+\alpha c +2b \delta)\int_S\chi (t)d\mu(t) \right) \chi(x\sigma(y)) $$$$-\left( 	2 b+(\alpha-\alpha c -2b \delta) \int_{S}\chi(\sigma(t)d\mu(t)\right) \chi( \sigma(x) y)=0
		 						$$
		 						for all $x,y \in S$. Now, from \cite[Theorem 3.18]{g} we deduce that
		 						 \begin{equation}
		 						 \label{n1}
		 						 	2 b-(\alpha+\alpha c +2b \delta)\int_{S}\chi (t)d\mu(t) =0\;  and\;  		2 b+(\alpha-\alpha c -2b \delta) \int_{S}\chi\circ \sigma(t)d\mu(t)=0.  \end{equation}
		 						Furthermore, we have $\alpha \neq\pm (2b \delta +\alpha c)$,  since $b\neq 0$. Finally, we have
		 					$	
		 				\int_{S}	\chi (t)d\mu(t)=	\dfrac {2b }{\alpha (1+c)+2b\delta   }$ and $ \int_{S}\chi\circ \sigma(t)d\mu(t)=\dfrac {2b }{\alpha (c-1)+2b\delta   }.
		 				$ We see that we deal with case (5).\\(ii) There exist a  constant $c\in \mathbb{C}$, an exponential  $\chi $ on $S$, and a non-zero function $\phi_{\chi }$ solution  (\ref{111}) such that
		 							$
		 							   f-\alpha g= \phi_{\chi }\; and \;   \delta g=\chi +c   \phi_{\chi }
		 							 $
		 							 with $\chi  \circ \sigma = \chi $, $\phi_{\chi }\circ \sigma = -\phi_{\chi }$.
		 							 So, we have
		 							 $f =\dfrac{1}{\delta}[ \alpha \chi + (\alpha c+\delta)\phi_{\chi }]\;\;\; and \;\;\;    g=\dfrac{1}{\delta}(\chi +c   \phi_{\chi }).
		 							 $ Now,  by substituting the new expression of $f$ and $g$ into (\ref{6}), using \cite[Lemma 5.1(b)]{d}, and that $\int_S f(t)d\mu(t)=0$  by Lemma \ref{uu}  we conclude that $\alpha c+\delta\neq0$,
		 							   $\int_{S}\chi(t)d\mu(t)=\dfrac{1}{\alpha c+\delta}$ and $\int_S \phi_{\chi }d\mu(t)=\dfrac{-\alpha}{(\alpha c+\delta)^2} $.  We conclude that we deal with case (7).
		 							
		 							   	Conversely,  simple computations prove that the formulas above for      $f$ and $g$   define  solutions of (\ref{6}).
		 							   	
		 							  Finally,  suppose that $S$ is topological semigroup and $f, g \in C(S)$. The  continuity of cases (1) and (2) are evident.  The continuity  of $\chi$ and $\chi \circ \sigma$ in case (3)  follows directly from the form of $g$ by applying \cite[Theorem 3.18(d)]{g} since $b\neq0$ and  $\chi\neq \chi \circ \sigma$. The case (4)   can be treated as the case (3). For the case (5) we have $f-\alpha g= b (\chi -\chi\circ \sigma).
$ Then the continuity of $\chi$ and $\chi \circ \sigma$ follows from \cite[Theorem 3.18]{g} because $ b\neq0$ and $\chi\neq \chi \circ \sigma$. For the case (6)  we have $g=\phi_{\chi}$ is  continuous. As $\gamma f +\dfrac{1}{  \int_S \chi(t)d\mu(t)} \phi_{\chi}=\chi$ we get $\chi \in C(S)$. The parts (7) and (8) can be treated as case (6). This completes the proof of Theorem \ref{t01}.
\end{proof}
\section{Solution of  equation (\ref{elqorachi2})}
We start with three  lemmas which contain some useful results about the solutions of the integral Kannappan-Sine addition law (\ref{elqorachi2}).
		 							 	 \begin{lem}
		 							 	 	\label{l1}
		 							 	 	Let    $f, g :S\longmapsto\mathbb{C}$ be a solution  of  equation (\ref{elqorachi2}). Then we have the following.\\
		 							 	 	(i) If $\int_{S}f(t)d\mu(t)=0 $, then  for all $ x,y \in S $
		 							 	 	\begin{equation}
		 							 	 	\label{14}
		 							 	 	f(x\sigma(y)) \int_{S} \int_{S}  	   g(\sigma(t)s)d\mu(s) d\mu(t) \end{equation}   $$   = \left[ f(x)g(y)+f(y)g(x)\right] \int_{S}  	   g(s)d\mu(s)-g(x\sigma(y))\int_{S}\int_{S}    f(\sigma(t)s)d\mu(s)  d\mu(t) .$$
		 							 	 	(ii) If $\int_{S} f(t)d\mu(t)\neq0$,  then there exists a constant $\alpha \in \mathbb{C}$ such that
		 							 	 	\begin{equation}
		 							 	 	\label{63}
		 							 	 	\int_{S}	g(x\sigma(y) t)d\mu(t)=g(x)g(y)+\alpha^2 f(x)f(y), \;\; x,y \in S.
		 							 	 	\end{equation}
		 							 	 \end{lem}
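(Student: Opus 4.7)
Both parts rest on a common strategy: evaluate one triple integral in two different ways and compare. The essential algebraic ingredients are that $\sigma$ is a homomorphism and involutive, giving identities such as $\sigma(y\sigma(k))=\sigma(y)k$ and $\sigma(y\sigma(s)k)=\sigma(y)s\sigma(k)$; these regroupings let me apply (\ref{elqorachi2}) to the same integrand under several different splits of the product.

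For part (i) I plan to follow the pattern of the proof of Lemma \ref{r8}: substitute $x\mapsto x\sigma(y\sigma(k))$ and $y\mapsto s$ in (\ref{elqorachi2}) and integrate against $d\mu(k)d\mu(s)$. A direct application of (\ref{elqorachi2}) factors the right-hand side into products of single integrals; using $\sigma(y\sigma(k))=\sigma(y)k$ and (\ref{elqorachi2}) once more, one factor evaluates to $f(x)g(y)+f(y)g(x)$, while the other is annihilated by the hypothesis $\int_S f\,d\mu=0$. A second evaluation uses the regrouping $x\sigma(y\sigma(k))\sigma(s)t=(x\sigma(y))\sigma(\sigma(k)s)t$ together with (\ref{elqorachi2}) applied with $X=x\sigma(y)$, $Y=\sigma(k)s$, producing $f(x\sigma(y))\int_S\int_S g(\sigma(k)s)\,d\mu(s)d\mu(k)+g(x\sigma(y))\int_S\int_S f(\sigma(k)s)\,d\mu(s)d\mu(k)$. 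Equating the two evaluations and rearranging is exactly (\ref{14}).

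For part (ii) I would substitute $y\mapsto y\sigma(s)k$ in (\ref{elqorachi2}) and integrate over $s$ and $k$. The decomposition $\sigma(y\sigma(s)k)=\sigma(y\sigma(s))\sigma(k)$, together with (\ref{elqorachi2}) for $X=x\sigma(y\sigma(s))$, $Y=k$ and the identity $x\sigma(y\sigma(s))=x\sigma(y)s$, turns the triple integral into $d[f(x)g(y)+f(y)g(x)]+c\int_S g(x\sigma(y)t)\,d\mu(t)$ after one further use of (\ref{elqorachi2}); here $c=\int_S f\,d\mu\neq 0$ and $d=\int_S g\,d\mu$. The direct expansion gives $f(x)M(y)+g(x)[df(y)+cg(y)]$ with $M(y):=\int_S\int_S g(y\sigma(s)k)\,d\mu(k)d\mu(s)$, after evaluating $\int_S\int_S f(y\sigma(s)k)\,d\mu(k)d\mu(s)=df(y)+cg(y)$ by (\ref{elqorachi2}). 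Solving the resulting linear identity for the unknown integral yields
\[
\int_S g(x\sigma(y)t)\,d\mu(t)=g(x)g(y)+f(x)H(y),\qquad H(y):=\frac{M(y)-dg(y)}{c}.
\]

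The step I expect to be the main obstacle is identifying $H$ as a scalar multiple of $f$. The plan is to insert the displayed formula back into the definition of $M(y)$: this forces $M(y)=dg(y)+f(y)\int_S H\,d\mu$, hence $H=\lambda f$ with $\lambda=\frac{1}{c}\int_S H\,d\mu\in\mathbb{C}$. Since $\mathbb{C}$ is algebraically closed I can choose $\alpha\in\mathbb{C}$ with $\alpha^2=\lambda$, and (\ref{63}) follows. The recurring bookkeeping hazard throughout is to keep the $\sigma$-manipulations consistent so that the two evaluations of the triple integral land on comparable expressions.
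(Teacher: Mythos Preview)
Your proposal is correct and follows essentially the same route as the paper: part (i) uses the substitution $(x,y)\mapsto(x\sigma(y\sigma(k)),s)$ and the regrouping $x\sigma(y\sigma(k))\sigma(s)t=(x\sigma(y))\sigma(\sigma(k)s)t$ exactly as in the paper, and part (ii) uses the substitution $y\mapsto y\sigma(s)k$ together with the same identification $H=\lambda f$ obtained by feeding the displayed formula back into $M(y)$ (the paper phrases this step as setting $y=s$ in the analogue of your displayed formula and integrating, which is the same computation). No gaps.
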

		 							 	 \begin{proof}
		 							 	 	(i) Suppose that   $ \int_{S} f(t)d\mu(t)=0 $. 	Replacing  $x$ by $x\sigma(y\sigma(k))$ and $y$ by $s$ in (\ref{elqorachi2}) and integrating the result obtained with respect to $k$ and $s$ we get
		 							 	 	\begin{equation*}\begin{split}&		 							 	 	\int_{S}\int_{S}\int_{S}f(x\sigma(y\sigma(k))\sigma(s)t)d\mu(t)d\mu(s)d\mu(k)\\  &=\int_{S}f(x\sigma(y)k))d\mu(k)\int_{S}g( s)d\mu(s)+\int_{S}f(s)d\mu(s)\int_{S}g(x\sigma(y)k)d\mu(k)\\
&=\int_{S}f(x\sigma(y)k))d\mu(k)\int_{S}g(s)d\mu(s)\\
&=\int_{S}\int_{S}\int_{S}f(x\sigma(y)\sigma(\sigma(k)s)t)d\mu(t)d\mu(s)d\mu(k)\\
&=f(x\sigma(y))\int_{S}\int_{S}g(\sigma(k)s)d\mu(s)d\mu(k)+g(x\sigma(y))\int_{S}\int_{S}f(\sigma(k)s)d\mu(s)d\mu(k).
\end{split}\end{equation*}	
This proves (\ref{14}).\\

		 							 	 		 							 	 	(ii) By replacing $y$ by $y\sigma(s)k$  in (\ref{elqorachi2}) and integrating the result obtained with respect to $s$ and $k$  we obtain
		 							 	 	\begin{equation*}\begin{split}&
\int_{S}\int_{S}\int_{S}f(x\sigma(y\sigma(s)k)t)d\mu(t)d\mu(k)d\mu(s)\\	&= f(x)\int_{S}\int_{S}g(y\sigma(s) k)d\mu(k)d\mu(s)+g(x)\int_{S}\int_{S}f(y\sigma(s)k))d\mu(k)d\mu(s)\\
&=\int_{S}\int_{S}\int_{S}f(x\sigma(y\sigma(s))\sigma(k)t)d\mu(k)d\mu(s)d\mu(t)\\
&=\int_{S}f(x\sigma(y\sigma(s)))d\mu(s)\int_{S}g(k)d\mu(k)+\int_{S}f(k)d\mu(k)\int_{S}g(x\sigma(y\sigma(s)))d\mu(s).
\end{split}\end{equation*}
Then, we deduce that
		 							 	 	\begin{equation*}\begin{split}&\int_{S}g(k)d\mu(k)[f(x)g(y)+ f(y)g(x)]+\int_{S}f(k)d\mu(k)\int_{S}g(x\sigma(y )s)d\mu(s)\\
&=f(x)\int_{S}\int_{S}g(y\sigma(s)k)d\mu(k)d\mu(s)\\
&\quad+g(x)\left[f(y)\int_{S}g(s)d\mu(s)+g(y)\int_{S}f(s)d\mu(s)\right] ,\end{split}\end{equation*}
		 							 		from which we   derive
		 $$\int_{S}f(s)d\mu(s)\left[  \int_{S} g(x\sigma(y)t)d\mu(t)-g(x)g(y)\right]   $$
			$$ = f(x)\left[\int_{S} \int_{S} 	   g(y\sigma(s)t)d\mu(s) d\mu(t)-g(y)\int_{S}g(s)d\mu(s) \right].  $$
		 	Since $\int_{S}  f(s)d\mu(s) \neq0$ by assumption, we get that
		 							 	 	\begin{equation}
		 							 	 	\label{vvv} \int_{S} g(x\sigma(y) t)d\mu(t)-g(x)g(y)=f(x)\psi(y),
		 							 	 	\end{equation}
		 							 	 	where
		 							 	 	\begin{equation}
		 							 	 	\label{uuu}
		 							 	 	\psi(y):=\dfrac{\int_{S} \int_{S} 	   g(y\sigma(s)t)d\mu(s) d\mu(t)-g(y)\int_{S}g(s)d\mu(s)}{\int_{S}  f(s)d\mu(s)}.
		 							 	 	\end{equation}
		 							 	 	By replacing $y$ by $s$    in (\ref{vvv}) and integrating the result obtained with respect to  $s$  and using (\ref{uuu}), we obtain
		 							 	 	\begin{equation*}\begin{split}		 							 	 		f(x)\int_{S}\psi(s)d\mu(s) &=\int_{S}\int_{S} g(x\sigma(s) t)d\mu(s)d\mu(t)-g(x)\int_{S}g(s)d\mu(s)\\		 							 	 		&=\psi(x)\int_{S}  f(s)d\mu(s).
\end{split}\end{equation*}
Since $\int_{S}  f(s)d\mu(s) \neq0$, we deduce that  $\psi(x)= \xi f(x)$, where $\xi:=\dfrac{\int_{S}\psi(s)d\mu(s)}{\int_{S}  f(s)d\mu(s)}. $
		 							 	 	Choosing $  \alpha \in \mathbb{C}$ such that  $ \alpha^2=\xi$, equation  (\ref{vvv}) reads
		 							 	 	$
		 							 	 	\int_{S} g(x\sigma(y) t)d\mu(t)=g(x)g(y)+\alpha^2 f(x)f(y), x,y\in S.$
This proves (\ref{63}).
\end{proof}
\begin{lem}\label{l2}
Let   $f, g :S\longmapsto\mathbb{C}$ be a solution  of the functional equation (\ref{elqorachi2}) such that  $\int_{S}  f(t)d\mu(t)=0$ and   $\{f,g \}$ is linearly independent.  \\
		 							 	 	(i) $\int_{S}  g(t) d\mu(t)\neq0.$  \\
		 							 	 	(ii)  If $\int_{S} \int_{S}  	   g(\sigma(t)s)d\mu(s) d\mu(t)=0 $,
		 							 	 	then $\int_{S} \int_{S}  	   f(\sigma(t)s)d\mu(s) d\mu(t)\neq0.$
		 							 	 \end{lem}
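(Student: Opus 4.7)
The plan is to mirror the strategy used for Lemma \ref{r10} in the subtraction setting, replacing the key identity (\ref{08}) by the analogous identity (\ref{14}) from Lemma \ref{l1}(i), and exploiting linear independence at the end.

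For part (i), I would argue by contradiction: suppose $\int_S g(t)d\mu(t)=0$. Then replace $y$ by $yk$ in (\ref{elqorachi2}) and integrate with respect to $k$. Computing the double integral in two ways (using that $\sigma$ is a homomorphism, so $\sigma(yk)=\sigma(y)\sigma(k)$, then swapping the order of integration and reapplying (\ref{elqorachi2}) with the pair $(x\sigma(y),k)$), the right-hand side becomes
\begin{equation*}
f(x\sigma(y))\int_{S}g(k)d\mu(k)+g(x\sigma(y))\int_{S}f(k)d\mu(k)=0,
\end{equation*}
because both scalar integrals vanish by assumption and by the hypothesis $\int_S f(t)d\mu(t)=0$. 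Comparing with the other expression gives
\begin{equation*}
f(x)\int_{S}g(yk)d\mu(k)+g(x)\int_{S}f(yk)d\mu(k)=0,\quad x,y\in S.
\end{equation*}
Linear independence of $\{f,g\}$ forces $\int_S g(yk)d\mu(k)=\int_S f(yk)d\mu(k)=0$ for every $y\in S$. Feeding this back into (\ref{elqorachi2}) with $y$ replaced by $x\sigma(y)$ yields $f(x)g(y)+f(y)g(x)=0$ on $S\times S$, which is an identity of sine-addition type with right side zero; a standard specialization (fix $y_0$ with $g(y_0)\neq 0$, giving $f=\lambda g$, or else $g\equiv 0$) contradicts the linear independence of $\{f,g\}$.

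For part (ii), I would again argue by contradiction. Assume both $\int_S\int_S g(\sigma(t)s)d\mu(s)d\mu(t)=0$ and $\int_S\int_S f(\sigma(t)s)d\mu(s)d\mu(t)=0$. Substituting these into the identity (\ref{14}) of Lemma \ref{l1}(i) (which is available since $\int_S f(t)d\mu(t)=0$), both sides involving $f(x\sigma(y))$ and $g(x\sigma(y))$ drop out, leaving
\begin{equation*}
\bigl[f(x)g(y)+f(y)g(x)\bigr]\int_{S}g(s)d\mu(s)=0,\quad x,y\in S.
\end{equation*}
By part (i), $\int_S g(s)d\mu(s)\neq 0$, hence $f(x)g(y)+f(y)g(x)=0$ on $S\times S$, and the same elementary argument as in (i) contradicts the linear independence of $\{f,g\}$.

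The proof is mostly bookkeeping: the genuine obstacle is only the verification that the double-integral manipulation in (i) is legitimate and that the final identity $f(x)g(y)+f(y)g(x)=0$ really does entail linear dependence (one has to handle the degenerate subcase where $f\equiv 0$ separately, but this is immediate). No new structural input beyond Lemma \ref{l1}(i) and the linear-independence hypothesis is required.
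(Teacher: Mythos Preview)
Your proposal is correct and follows essentially the same route as the paper: for (i) you substitute $y\mapsto yk$, integrate, compute the double integral two ways to get $f(x)\int_S g(yk)d\mu(k)+g(x)\int_S f(yk)d\mu(k)=0$, use linear independence, and then feed $\int_S f(zk)d\mu(k)\equiv 0$ into (\ref{elqorachi2}); for (ii) you plug the two vanishing double integrals into (\ref{14}) and cancel $\int_S g(s)d\mu(s)\neq 0$ from part (i). The only cosmetic slip is the phrase ``with $y$ replaced by $x\sigma(y)$'' in (i)---you are not making a substitution but simply observing that the left side of (\ref{elqorachi2}) is $\int_S f(zk)d\mu(k)$ with $z=x\sigma(y)$.
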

		 							 	 \begin{proof}
		 							 	 	(i) Assume that $\int_{S}  g(t) d\mu(t)=0$. Making    the substitution  $(x,\sigma(ys) )$  in (\ref{elqorachi2}) and integrating the result obtained with respect to $s$ we get
		 							 	 	\begin{equation*}\begin{split}&					 	 		\int_{S}\int_{S}f(x\sigma(ys)t)d\mu(t)d\mu(s)\\
&=f(x )\int_{S}g(ys)d\mu(s)+g(x)\int_{S}f(ys)d\mu(s)\\
&=\int_{S}\int_{S}	f(x\sigma(y)\sigma(s)t)d\mu(t)d\mu(s)\\	&=f(x\sigma(y))\int_{S}g(s)d\mu(s)+g(x\sigma(y))\int_{S}  f(s)d\mu(s)=0.\end{split}\end{equation*}
Since $\{f,g\}$ is linearly independent, then in particular     $\int_{S}f(ys)d\mu(s) =0 $ for all $y\in S$, and  Eq.(\ref{elqorachi2}) implies that
		 						$f(x)g(y)=-f(y)g(x)$ for all $ x,y\in S.
$ This contradicts the fact that
$\{f,g\}$ linearly independent. So, $\int_{S}  g(t) d\mu(t)\neq0$.\\
(ii) Assume that $\int_{S}\int_{S}g(\sigma(t)s)d\mu(s )d\mu(t)=0$.\\ If $\int_{S}\int_{S}f(\sigma(t)s)d\mu(s )d\mu(t )=0$ then equation (\ref{14}) can be written  as follows
$		 							 	 	f(x)g(y)+f(y)g(x)=0$ for all $x,y\in S.
$ 	This contradicts the fact that $\{f,g\}$ is linearly independent. Thus  $\int_{S}\int_{S}f(\sigma(s)t)d\mu(s)d\mu(t ) \neq0$.
\end{proof}
		 							 	 \begin{lem}
		 							 	 	\label{l3}
		 							 	 	Let   $f, g :S\longmapsto\mathbb{C}$ be a solution  of  equation (\ref{elqorachi2}) such that  $\int_{S}  f(t)d\mu(t)=0$ and   $\{f,g \}$ is linearly independent. \\
		 							 	 	(i)	$   \int_{S} f(ts)d\mu(s)d\mu(t)= \int_{S}\int_{S} f(\sigma(t)s)d\mu(s)d\mu(t)=\int_{S}\int_{S}f(s\sigma(t))d\mu(t)d\mu(s).$
		 							 	 	(ii) If $\int_{S} \int_{S}  	   g(\sigma(t)s)d\mu(s) d\mu(t)\neq0$, then   $\int_{S} \int_{S}  	   f(\sigma(t)s)d\mu(s) d\mu(t)=0.$
		 							 	 \end{lem}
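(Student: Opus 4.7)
The plan is to derive, in both parts, identities among the integrated quantities by substituting into equation~(\ref{elqorachi2}), evaluating the resulting multiple integrals in two Fubini-equivalent ways, and invoking the linear independence of $\{f,g\}$ together with $\int_{S} f\,d\mu = 0$ and $\int_{S} g\,d\mu \neq 0$ (the latter from Lemma~\ref{l2}(i)).

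For part (i), my strategy is to establish the pointwise formulas
\[
\beta_g\,f(x\sigma(y))=f(x)G_2(y)+g(x)F_2(y),\qquad
\beta_g\,f(\sigma(x)y)=f(y)G_1(x)+g(y)F_1(x),
\]
where $\beta_g=\int_{S} g\,d\mu$, and $F_i,G_i$ are the appropriate one-variable $\mu$-integrals of $f,g$, then integrate to conclude. To derive the first identity, I would substitute $y\mapsto yk$ in~(\ref{elqorachi2}) and integrate against $d\mu(k)$; the resulting triple integral $\int_S\int_S f(x\sigma(y)\sigma(k)t)\,d\mu(t)\,d\mu(k)$ can be re-evaluated by applying~(\ref{elqorachi2}) a second time with pair $(x\sigma(y),k)$, at which point $\int_{S} f\,d\mu=0$ collapses one term to give $\beta_g f(x\sigma(y))$. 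Integrating the resulting identity over $(x,y)\in S\times S$ against $\mu\otimes\mu$, using Fubini to identify $\int_{S} F_2\,d\mu=A_f$ and $\int_{S} G_2\,d\mu=A_g$, yields $\beta_g C_f=\beta_g A_f$. Since $\beta_g\neq 0$, $C_f=A_f$. The symmetric substitution $x\mapsto kx$ (re-evaluated via~(\ref{elqorachi2}) with pair $(k,\sigma(x)y)$) yields the second pointwise identity, and integration gives $B_f=A_f$.

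For part (ii), assume $B_g:=\int_S\int_S g(\sigma(t)s)\,d\mu(s)\,d\mu(t)\neq 0$. Equation~(\ref{14}) of Lemma~\ref{l1}(i) rearranges to
\[
B_g f(x\sigma(y))+B_f g(x\sigma(y))=\beta_g\bigl[f(x)g(y)+f(y)g(x)\bigr].
\]
Integrate over $(x,y)\in S\times S$, using $\int_{S} f\,d\mu=0$ on the right and part (i) ($C_f=B_f$) on the left: this gives
\[
B_f(B_g+C_g)=0.
\]
If $B_f=0$ we are done; otherwise $B_g+C_g=0$. To preclude this, I would derive two further integrated identities from~(\ref{14}): substituting $y\mapsto\sigma(y)$ and integrating yields $B_f(B_g+A_g)=\alpha_f\beta_g^2$, where $\alpha_f=\int_{S} f\circ\sigma\,d\mu$; substituting both $x,y\mapsto\sigma(x),\sigma(y)$ yields $B_f B_g=\alpha_f\alpha_g\beta_g$, with $\alpha_g=\int_{S} g\circ\sigma\,d\mu$. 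Combined with $B_g\neq 0$ and the linear independence of $\{f,g\}$, this system of algebraic constraints forces $B_f=0$.

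The main obstacle is the final step of part (ii): passing from $B_f(B_g+C_g)=0$ to $B_f=0$ requires combining several integrated identities extracted from~(\ref{14}) under various $\sigma$-substitutions and carefully exploiting the relations among $\alpha_f,\alpha_g,\beta_g,A_g,B_g,C_g$ to preclude the degenerate case $B_g+C_g=0$ when $B_f\neq 0$; this is where most of the technical work lies.
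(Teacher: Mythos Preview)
Your approach to part~(i) is correct and is essentially the same as the paper's: both establish the equalities by evaluating suitable multiple integrals in two ways via~(\ref{elqorachi2}), using $\int_S f\,d\mu=0$ and $\beta_g\neq 0$. You package the computation as a pointwise identity and then integrate, whereas the paper works directly with quadruple integrals, but the content is identical.

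For part~(ii), however, there is a genuine gap. Your three integrated consequences of~(\ref{14}),
\[
B_f(B_g+C_g)=0,\qquad B_f(B_g+A_g)=\alpha_f\beta_g^2,\qquad B_fB_g=\alpha_f\alpha_g\beta_g,
\]
do \emph{not} force $B_f=0$ under the hypotheses $B_g\neq 0$, $\beta_g\neq 0$. For instance the assignment $\alpha_f=\alpha_g=\beta_g=B_f=B_g=1$, $A_g=0$, $C_g=-1$ satisfies all three relations with $B_f\neq 0$ and $B_g\neq 0$. Linear independence of $\{f,g\}$ is a pointwise statement, and you have not indicated how it enters to exclude such configurations at the level of these scalar constraints; simply asserting that ``the system \dots\ forces $B_f=0$'' is not justified.

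What is missing is precisely one more relation linking $C_g$ to $A_g$ (or to $B_g$). The paper obtains it by a different substitution: replacing $y$ by $yks$ in~(\ref{elqorachi2}), integrating over $k,s$, and evaluating the resulting triple integral in two ways yields the pointwise identity
\[
f(x)\!\int_S\!\!\int_S g(yks)\,d\mu(k)d\mu(s)+g(x)\bigl[g(y)\alpha_f+f(y)\alpha_g\bigr]
= A_g\,f(x\sigma(y))+A_f\,g(x\sigma(y)).
\]
Integrating this over $(x,y)$ and using part~(i) gives $\alpha_f\beta_g^2=B_f(A_g+C_g)$. Subtracting this from your second identity $B_f(B_g+A_g)=\alpha_f\beta_g^2$ yields (under $B_f\neq 0$) $B_g=C_g$, which together with your first identity $B_g+C_g=0$ forces $B_g=0$, the desired contradiction. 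None of your three $\sigma$-substitutions in~(\ref{14}) can produce this relation, because they never generate the combination $A_g+C_g$; you need to go back to~(\ref{elqorachi2}) with a substitution that mixes two integration variables inside the $\sigma$-argument.
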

		 							 	 \begin{proof} (i)
		 							 	     By using  (\ref{elqorachi2})   we have
\begin{equation*}\begin{split}& \int_{S}\int_{S}\int_{S} \int_{S}f(r\sigma(ts)k)d\mu(k)d\mu(s)d\mu(t)d\mu(r)\\
&=\int_{S}f(r)d\mu(r)\int_{S}\int_{S}g(ts)d\mu(s)d\mu(t)+\int_{S}g(r)d\mu(r)\int_{S}\int_{S}f(ts)d\mu(s)d\mu(t)\\  &=   \int_{S}g(r)d\mu(r)\int_{S}\int_{S}f(ts)d\mu(s)d\mu(t)\\ &=\int_{S}\int_{S}\int_{S} \int_{S}	f(r\sigma(t)\sigma(s)k)d\mu(k)d\mu(s)d\mu(t)d\mu(r)\\
&=\int_{S}f( s)d\mu(s) \int_{S}\int_{S}g(r\sigma(t))d\mu(t)d\mu(r)\\
&\quad+\int_{S}g(s)d\mu(s)\int_{S}\int_{S}f(r\sigma(t))d\mu(t)d\mu(r)\\
&= \int_{S}g( s)d\mu(s) \int_{S}\int_{S}f(r\sigma(t))d\mu(t)d\mu(r).\end{split}\end{equation*}
This implies that
		 							 	 	\begin{equation}
		 							 	 	\label{r1}
		 							 	 	\int_{S}\int_{S}f(s\sigma(t))d\mu(t)d\mu(s)= \int_{S}\int_{S}f(ts)d\mu(s)d\mu(t),
\end{equation}
since $  \int_{S}g( s)d\mu(s)\neq 0$ by Lemma \ref{l2} (i). On the other hand by using  (\ref{elqorachi2})  we have
\begin{equation*}\begin{split}&\int_{S}\int_{S}\int_{S} \int_{S}f(r\sigma(\sigma(t)s)k)d\mu(k)d\mu(s)d\mu(t)d\mu(r)\\
&=\int_{S}f(r)d\mu(r)\int_{S}\int_{S}g(\sigma(t)s)d\mu(s)d\mu(t)+\int_{S}g(r)d\mu(r)\int_{S}\int_{S}f(\sigma(t)s)d\mu(s)d\mu(t)\\
&=\int_{S}g(r)d\mu(r)\int_{S}\int_{S}f(\sigma(t)s)d\mu(s)d\mu(t)\\  &=\int_{S}\int_{S}\int_{S} \int_{S}f(rt\sigma(s)k)d\mu(k)d\mu(s)d\mu(t)d\mu(r)\\
&=\int_{S}f( s)d\mu(s) \int_{S}\int_{S}g(rt)d\mu(t)d\mu(r)+ \int_{S}g( s)d\mu(s) \int_{S}\int_{S}f(rt)d\mu(t)d\mu(r)\\
&=\int_{S}g(s)d\mu(s)\int_{S}\int_{S}f(rt)d\mu(t)d\mu(r),
\end{split}\end{equation*}
from which we derive		 							 	 	 \begin{equation}
\label{r2}		 							 	 	 \int_{S}\int_{S}f(ts)d\mu(s)d\mu(t)= \int_{S}\int_{S}f(\sigma(t)s)d\mu(s)d\mu(t),
\end{equation}
since $  \int_{S}g( s)d\mu(s)\neq 0$ by Lemma \ref{l2} (i).
Then (\ref{r1})  and (\ref{r2})  prove (i).
\\(ii)  Suppose $\int_{S}\int_{S} g(\sigma(t)s) d\mu(s) d\mu(t)\neq0$ and  $\int_{S}\int_{S} f(\sigma(t)s) d\mu(s) d\mu(t)\neq0$ . 	Letting $y=\sigma(t)$ in (\ref{14}) and integrating the result obtained with respect to $t$, we have
		 							 	 	\begin{equation*}
		 							 	 	\int_{S} \int_{S}  	   g(\sigma(t)s)d\mu(s) d\mu(t) \int_{S}f(xt)d\mu(t)=\end{equation*}   $$f(x) \int_{S}  g(\sigma(t))d\mu(t)   \int_{S}  	   g(s)d\mu(s) +g(x) \int_{S}  f(\sigma(t))d\mu(t)   \int_{S}  	   g(s)d\mu(s) $$$$   -\int_{S}\int_{S}    f(\sigma(t)s)d\mu(s)  d\mu(t) \int_{S}g(xt)d\mu(t).$$
		 							 	 	By setting $x=s$ in the last   identity  and integrating the result obtained  with respect to $s$  and using (i), we get that
		 							 	 	\begin{equation}\label{s1}\begin{split}&
\int_{S}f(\sigma(t))d\mu(t) \left( \int_{S}  	   g(s)d\mu(s)\right)^2\\		 							 	 	&=\int_{S}\int_{S}    f(\sigma(t)s)d\mu(s)  d\mu(t)\left[ \int_{S}\int_{S}    g(\sigma(t)s)d\mu(s)  d\mu(t)+\int_{S} \int_{S}g(st)d\mu(t)d\mu(s)\right].\end{split}\end{equation}
On the other hand, by  using (\ref{elqorachi2})  we get that
		 							 	 	$$\int_{S}	 \int_{S}	\int_{S} f(x\sigma(yks)t)d\mu(t)d\mu(s)d\mu(k) $$
		 							 	 	\begin{eqnarray*}		
		 							 	 		&=&     f(x )\int_{S} \int_{S}  	   g(yks)d\mu(s)    d\mu(k)+   g(x  )\int_{S}\int_{S}  f(yks)d\mu(s)d\mu(k)\\
		 							 	 		&=&  f(x )\int_{S}  \int_{S} 	   g(yks)d\mu(s)    d\mu(k)\\
		 							 	 		&\quad\quad+& g(x)  \big[ g(y  )\int_{S}f(\sigma(k)) d\mu(k)+f(y)\int_{S}g(\sigma(k)) d\mu(k)\big]
\end{eqnarray*}
and
\begin{equation*}\begin{split}&\int_{S} \int_{S}	\int_{S} f(x\sigma(yks)t)d\mu(t)d\mu(s)d\mu(k)\\
&=\int_{S} \int_{S}	\int_{S} f(x\sigma(y)\sigma(ks)t)d\mu(t)d\mu(s)d\mu(k)\\
&= f(x\sigma(y ) )\int_{S}\int_{S} g( zs) d\mu(s)d\mu(k) +  g(x\sigma(y ) ) \int_{S} \int_{S}f(ks) d\mu(s)d\mu(k).\end{split}\end{equation*}
We infer from the two last identities that
		 							 	 	$$f(x )\int_{S}\int_{S}  	   g(yks)d\mu(s)    d\mu(z)+g(x) \left[  g(y  )\int_{S}\int_{S}f(\sigma(k)) d\mu(k)+f(y)\int_{S}g(\sigma(k)) d\mu(k)\right]  $$ $$= f(x\sigma(y ) )\int_{S}\int_{S} g(ks) d\mu(s)d\mu(k) +  g(x\sigma(y ) ) \int_{S}\int_{S} f(ks) d\mu(s)d\mu(k).$$
Letting  $x=s$ and $y=t$ in the last equation, and 	integrating the result obtained with respect to $t$  and  $s$, by using (i) and $\int_S f(t)d\mu(t)=0$ we deduce that
		 							 	 	\begin{equation}
		 							 	 	\label{s2}
		 							 	 	\int_{S}  	   f(\sigma(t))d\mu(t)   \left( \int_{S}  	   g(s)d\mu(s)\right)^2
		 							 	 	\end{equation}     $$=\int_{S}\int_{S}    f(\sigma(t)s)d\mu(s)  d\mu(t)\left[ \int_{S}\int_{S}    g(ks)d\mu(s)  d\mu(k)+\int_{S} \int_{S}g(k\sigma(s))d\mu(s)d\mu(k)\right].$$
		 							 	 	Then, by combining (\ref{s1}) and (\ref{s2}) and using (i) we get
		 							 	 	\begin{equation}
		 							 	 	\label{s4}
		 							 	 	\int_{S}\int_{S}    g(\sigma(t)s)d\mu(s)  d\mu(t)=\int_{S} \int_{S}g(t\sigma(s))d\mu(s)d\mu(t),
\end{equation}
since $	\int_{S}\int_{S} f(\sigma(t)s)d\mu(s)d\mu(t)\neq0$. Now,  putting $x=t$ and $y=s$ in (\ref{14}), and  integrating the result with respect to $s$ and  $t$, by using (i), we get that
		 							 	 	\begin{equation}
		 							 	 	\label{s3}
		 							 	 	\int_{S} \int_{S}  	   g(\sigma(t)s)d\mu(s) d\mu(t)=-\int_{S} \int_{S}  	   g(t\sigma(s) )d\mu(s) d\mu(t),
		 							 	 	\end{equation}
		 							 	 	since $\int_{S}\int_{S} f(\sigma(t)s)d\mu(s)d\mu(t)\neq0$.
		 							 	 	Then, from (\ref{s4}) and (\ref{s3}) we derive  $$\int_{S} \int_{S}  	   g(\sigma(t)s)d\mu(s) d\mu(t)=\int_{S} \int_{S}  	   g(t\sigma(s) )d\mu(s) d\mu(t)=0.$$
This contradicts the assumption that $\int_{S}\int_{S} g(\sigma(t)s) d\mu(s) d\mu(t)\neq0$, and completes the proof.
		 							 	 \end{proof} Now, we are ready to prove the main result of this section.\begin{thm}
\label{t1} The solutions $f,g:S\longrightarrow \mathbb{C}$ of   functional equation (\ref{elqorachi2}) are the following pairs of   functions.\\
(1) $f=0$ and $g$ arbitrary. \\
(2)   $f\neq0$, $g=0$ and $\int_{S} f(xyt) d\mu(t)=0$ for all $x,y \in S.$\\
		 							 	 	(3) There exist a constant $\delta\in \mathbb{C}^*$ and an exponential   $\chi $   on $S$ such that
		 							 	 	$ \chi\circ\sigma=\chi$,	$\int_{S} \chi(t) d\mu(t)\neq0$ and $
		 							 f=\dfrac{\chi}{2\delta} \int_{S} \chi(t) d\mu(t)$  and   $g= \dfrac{\chi}{2}  \int_{S} \chi(t) d\mu(t).$\\
		 							 	 	(4) There exist a constant $\alpha \in \mathbb{C}^*$ and two different exponentials   $\chi_1$
		 							 	 	and $\chi_2$
		 							 	 	on $S$  with $\chi_1\circ\sigma=\chi_1, \chi_2\circ\sigma=\chi_2,$  $\int_{S} \chi_1(t) d\mu(t)  \neq0$ and
		 							 	 	$\int_{S} \chi_2(t) d\mu(t) \neq0$    such that
		 							 	 $
		 							 	 	f= \dfrac{1  }{2\alpha}\left[ \chi_1\int_{S} \chi_1(t) d\mu(t) -   \chi_2\int_{S} \chi_2(t) d\mu(t)\right]  $ and \\
		 							 	 	$g=\dfrac{1  }{2 }\left[ \chi_1\int_{S} \chi_1(t) d\mu(t) +  \chi_2\int_{S} \chi_2(t) d\mu(t)\right].
		 							 	 $\\
		 							 	 	(5) There exist an  exponential $\chi $ on $S$	 and a non-zero function  $\phi_{\chi}$ solution of (\ref{111}) with  $\chi\circ\sigma=\chi$,  $\int_{S}\chi(t)d\mu(t)\neq0$,
	 							 	 	$\int_{S}\phi_\chi(t)d\mu(t)=0$, and $\phi_{\chi}\circ\sigma=\phi_{\chi}$  such that      $f=\phi_{\chi}$  and $ g=\chi	\int_{S}\chi(t)d\mu(t).$  \\(6) There exist an  exponential   $\chi $  on $S$	 and a non-zero function  $\Phi_{\chi}$ solution of   (\ref{44}) with $\int_{S}\chi(t)d\mu(t)\neq0$,    $\chi\circ\sigma=\chi$ such that      $f=\Phi_{\chi}$  and $g=\chi	\int_{S}\chi(t)d\mu(t).$\\
		 							 	 	Moreover if $S$ is a topological semigroup, $f\neq0$ and $f \in  {C}(S)$ then $g, \chi,\chi_1, \chi_1$ and $ \Phi_{\chi}   \in  {C}(S)$.\\
		 							 	 	Conversely, the solutions listed above for $f$ and $g$ define  a solution of (\ref{elqorachi2}).
		 							 	 \end{thm}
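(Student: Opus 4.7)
The plan is to follow the same overall strategy as for Theorem~\ref{t01}: isolate the trivial and linearly dependent cases, then in the independent case split according to whether $\int_S f(t)d\mu(t)$ vanishes, and in each branch decouple $f$ and $g$ so that either Proposition~\ref{p1} or a known classification of the sine addition law (\ref{elqorachi}) can be invoked. First, if $f=0$ we get case (1); if $f\neq 0$ and $\{f,g\}$ is linearly dependent, writing $g=kf$ reduces (\ref{elqorachi2}) to $\int_S f(x\sigma(y)t)d\mu(t)=2kf(x)f(y)$, which yields case (2) when $k=0$ (after using that $\sigma$ is a bijection) and, via Proposition~\ref{p1} applied to $2kf$, case (3) when $k\neq 0$. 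From now on assume $\{f,g\}$ is linearly independent.

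For the case $\int_S f\,d\mu\neq 0$, Lemma~\ref{l1}(ii) supplies $\alpha\in\mathbb{C}$ with $\int_S g(x\sigma(y)t)d\mu(t)=g(x)g(y)+\alpha^2 f(x)f(y)$. Adding and subtracting $\alpha$ times (\ref{elqorachi2}) shows that $h_\pm:=g\pm\alpha f$ both satisfy $\int_S h_\pm(x\sigma(y)t)d\mu(t)=h_\pm(x)h_\pm(y)$, to which Proposition~\ref{p1} applies. If $\alpha=0$, then $g=\bigl[\int_S\chi\,d\mu\bigr]\chi$ with $\chi\circ\sigma=\chi$ and $\int_S\chi\,d\mu\neq 0$, and back-substitution into (\ref{elqorachi2}) turns the equation for $f$ into (\ref{44}), so $f=\Phi_\chi$, yielding case (6). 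If $\alpha\neq 0$, neither $h_+$ nor $h_-$ can vanish without forcing $\{f,g\}$ dependent, so Proposition~\ref{p1} gives $h_\pm=c_\pm\chi_\pm$ with $c_\pm=\int_S\chi_\pm\,d\mu\neq 0$ and $\chi_\pm\circ\sigma=\chi_\pm$; the exponentials $\chi_+,\chi_-$ must be distinct (else $f,g$ would be proportional), and solving for $f,g$ delivers case (4).

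For the case $\int_S f\,d\mu=0$, Lemma~\ref{l2}(i) gives $\int_S g\,d\mu\neq 0$, and I split according to whether $\int_S\int_S g(\sigma(t)s)d\mu(s)d\mu(t)$ vanishes. If it vanishes, Lemma~\ref{l2}(ii) forces $\int_S\int_S f(\sigma(t)s)d\mu(s)d\mu(t)\neq 0$, and identity (\ref{14}) collapses to $g(x\sigma(y))=\gamma\bigl[f(x)g(y)+f(y)g(x)\bigr]$ for some nonzero $\gamma$, so $(g,\gamma f)$ satisfies the sine addition law (\ref{elqorachi}). If it does not vanish, Lemma~\ref{l3}(ii) gives $\int_S\int_S f(\sigma(t)s)d\mu(s)d\mu(t)=0$, and (\ref{14}) yields $f(x\sigma(y))=\beta\bigl[f(x)g(y)+f(y)g(x)\bigr]$, so $(f,\beta g)$ satisfies (\ref{elqorachi}). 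In both subcases I invoke the classification of (\ref{elqorachi}) on semigroups to extract the possible shapes: either combinations of two $\sigma$-invariant exponentials, or one exponential $\chi$ paired with a $\phi_\chi$ satisfying (\ref{111}). Feeding each shape back into (\ref{elqorachi2}) and imposing the $\sigma$-symmetry dictated by that classification recovers case (5) (from the $\phi_\chi$ branch with $\phi_\chi\circ\sigma=\phi_\chi$ and $\int_S\phi_\chi\,d\mu=0$) together with specializations already absorbed into cases (3), (4), and (6).

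The main obstacle will be the bookkeeping of this last case. Unlike in Theorem~\ref{t01}, where the subtraction sign produced a branch with $\chi\circ\sigma\neq\chi$ and a $\phi_\chi$ branch with $\phi_\chi\circ\sigma=-\phi_\chi$, the $+$ sign in (\ref{elqorachi2}) flips the parity and forces $\chi\circ\sigma=\chi$ and $\phi_\chi\circ\sigma=\phi_\chi$ throughout; verifying that no other combinations survive, and pinning the numerical side-conditions on $\int_S\chi\,d\mu$ and $\int_S\phi_\chi\,d\mu$ listed in the theorem, requires careful use of the linear independence of $\{\chi,\phi_\chi\}$ together with \cite[Theorem 3.18]{g}. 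Continuity in the topological case follows, as in Theorem~\ref{t01}, from the explicit forms of $g$ via \cite[Theorem 3.18]{g}, which yields continuity of $\chi$ and of $\chi_1,\chi_2$; continuity of $\phi_\chi$ or $\Phi_\chi$ is then immediate from the expression for $f$. The converse reduces to a direct substitution in each case using $\chi\circ\sigma=\chi$, $\phi_\chi\circ\sigma=\phi_\chi$ and the stated measure conditions.
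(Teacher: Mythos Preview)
Your proposal is correct and follows essentially the same route as the paper's own proof: the same trivial and linearly dependent cases, the same split on $\int_S f\,d\mu$, the same use of Lemmas~\ref{l1}--\ref{l3} and identity~(\ref{14}), Proposition~\ref{p1} in the $\int_S f\,d\mu\neq 0$ branch, and the sine addition classification from \cite{as} in the $\int_S f\,d\mu=0$ branch. Two small points: in Case~B the paper splits on whether $\chi_1\int_S\chi_1\,d\mu=\chi_2\int_S\chi_2\,d\mu$ rather than on $\alpha=0$, but this is equivalent; and in your Case~A summary, the back-substitution does not produce specializations of cases~(3) or~(6) --- case~(3) is the linearly dependent situation and cannot recur here, while the $\phi_\chi$ branch in Subcase~A.1 is in fact contradictory (the paper rules it out via \cite[Lemma~5.1]{d}), so only case~(5) and two special instances of case~(4) emerge from Case~A.
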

		 							 	 \begin{proof}
		 							 	 	Clearly if $f = 0$ then $g$ is arbitrary, and this is case (1). Henceforth
		 							 	 	we assume $f\neq0$.  Next suppose there exists a constant $ \delta \in \mathbb{C}$ such that $g=\delta f$. Then   equation (\ref{elqorachi2}) reduces to
		 							 	 	$\int_{S}f(x\sigma(y)t)d\mu(t)=2\delta f(x)f(y)$ for all  $x,y \in S.$
		 							 	 	If $\delta=0$ then $g=0$ and $\int_{S}f(x\sigma(y)t)d\mu(t)=0$ for all $x, y \in S$, and  we see that we deal with case (2).
		 							 	 	If $\delta \neq0$  then in view of  Proposition \ref{p1}  there exists  an exponential   $\chi$ on $S$ such that $2\delta f=: \chi \int_{S}\chi(t)d\mu(t)$ with $\chi\circ\sigma=\chi$, $\int_{S}\chi(t)d\mu(t)\neq0$. This implies
		 							 	 	$f=\dfrac{\chi }{2\delta } \int_{S}\chi(t)d\mu(t)$ and $g=\dfrac{\chi}{2}\int_{S}\chi(t)d\mu(t),$
		 							 	 	which is a solution family (3). From now we assume
		 							 	 	that $f$ and $g$ are linearly independent, and we consider two cases.\\
		 							 	 	 Case A:  $\int_{S}f(t)d\mu(t)=0 $.  From  Lemma \ref{l2} (i) we have: $\int_{S}g(t)d\mu(t) \neq0$.\\
		 							 	 	 \underline{Subcase A.1}:  $\int_{S}\int_{S}g(\sigma(t)s)d\mu(s)d\mu(t) =0$. From Lemma \ref{l2}  (ii)  we have      $\int_{S}\int_{S}f(\sigma(t)s)d\mu(s)d\mu(t)\neq0$. Then
		 							 	 	equation (\ref{14}) now reads
		 							 	 	\begin{equation}
		 							 	 	\label{e1}
		 							 	 	g(x\sigma(y))=  \beta g(x)f(y) +\beta g(y)f(x) , \; x,y\in S,
		 							 	 	\end{equation}
		 							 	 	where $\beta:=\dfrac{\int_{S} g(s)d\mu(s) }{\int_{S}\int_{S}f(\sigma(t)s)d\mu(s)d\mu(t)}\neq0$. By applying \cite[Theorem 4.3 ]{as} to (\ref{e1}) and using that $\{f,g\}$ is   independent we have only the following two possibilities:\\
		 							 	    (i) There exist  a constant $c\in \mathbb{C}^{*}$ and two   exponentials  $\chi_1$ and $\chi_2$ on $S$ such that: $\chi_1\neq \chi_2$, $\chi_1\circ \sigma=\chi_1$, $\chi_2\circ \sigma=\chi_2$ and
		 							 	 	$\beta f = (\chi_1+\chi_2)/{2},$ $g=c(\chi_1 - \chi_2)$. Substituting these new forms of $f$ and $g$ into (\ref{elqorachi2})  we get after some calculations that
		 							 	 	\begin{equation*}
		 							 	 	\left(  2 c -  \int_{S}\chi_1(t)d\mu(t)\right) \chi_1(xy)- \left( 2 c +  \int_{S}\chi_2(t)d\mu(t) \right) \chi_2(xy)=0,\;\; x,y \in S.
\end{equation*}
By \cite[Theorem 3.18]{g}  we get that   $\int_{S}\chi_1(t)d\mu(t)=-\int_{S}\chi_2(t)d\mu(t)=2c$. So we have  the special case of  the  solution family (4) corresponding to $\int_{S}\chi_1(t)d\mu(t)=-\int_{S}\chi_2(t)d\mu(t)=\beta^{-1}\alpha$.
		 							 	 	\\(ii)  There exist an  exponential  $\chi $   on $S$ such that:   $\beta f=\chi $  and  $g=\phi_{\chi}=\phi_{\chi}\circ \sigma$. Moreover $\phi_{\chi}\neq0$ since $\{f, g\}$ is independent. A small computation based on (\ref{elqorachi2}) show that
		 							 	 	\begin{equation}\label{ebn}
		 							 	 	\chi(x)\left(\phi_{\chi}(y)-\chi(y)\int_{S}\chi(t)d\mu(t)  \right)+\phi_{\chi}(x)\chi(y) =0,\;\; x,y \in S.
		 							 	 	\end{equation}
		 							 	  Then, by using \cite[Lemma 5.1]{d}, we get that $\chi=\phi_{\chi}=0$.
		 							 	 	This case does not apply, because $\chi\neq0$ and $\phi_{\chi}\neq0$.\\
		 							 	 	\underline{Subcase A.2}: Suppose $\int_{S}\int_{S}g(\sigma(t)s)d\mu(s)d\mu(t)\neq0$, then by lemma \ref{l3}  (ii)  we read that     $\int_{S}\int_{S}f(\sigma(t)s)d\mu(s)d\mu(t)=0$, and then equation (\ref{14}) yields
		 							 	 	\begin{equation}
		 							 	 	\label{e2}
		 							 	 	f(x\sigma(y))=  \alpha f(x)g(y) +\alpha f(y)g(x) , \; x,y\in S,
		 							 	 	\end{equation}	
		 							 	 	where $\alpha:=\dfrac{\int_{S} g(s)d\mu(s) }{\int_{S}\int_{S}g(\sigma(t)s)d\mu(s)d\mu(t)}\neq0$. By applying   \cite[Lemma 4.1]{as} to (\ref{e2}) we get that $f\circ\sigma = f$ and $g\circ\sigma = g$ since $\alpha\neq0$ and $\{f,g\}$ is independent. Therefore equation (\ref{e1}) is a sine addition law:	
		 							 	 	$f(xy)=  \alpha f(x)g(y) +\alpha f(y)g(x) , \; x,y\in S.
		 							 	 	$
		 							 	 	So we know from \cite[Theorem 4.3 ]{as} that there are only the following 2 possibilities:\\
		 							 	 	(i) There exists a constant $c\in \mathbb{C}^{*}$, there exist   two   exponentials  $\chi_1$ and $\chi_2$ on $S$ such that: $\chi_1\neq \chi_2$, $\chi_1\circ \sigma=\chi_1$, $\chi_2\circ \sigma=\chi_2$ and such that
		 							 	 	$\alpha g=( \chi_1+\chi_2){/2}$ and $f=c(\chi_1-\chi_2)$. Substituting the new form of $f$ and $g$ into (\ref{elqorachi2}) we get after some rearrangement  that
		 							 	 	\begin{equation*}
		 							 	 	\left(\alpha \int_{S}\chi_1(t)d\mu(t)-1 \right)\chi_1(xy)+\left(1-\alpha \int_{S}\chi_2(t)d\mu(t)+ \right)\chi_2(xy)=0
		 							 	 	\end{equation*} for all $ x,y\in S.$
		 							 	 	In view of \cite[Theorem 3.18 ]{g}  we deduce that
		 							 	  $\int_{S}\chi_1(t)d\mu(t)=\int_{S}\chi_2(t)d\mu(t)= {1}/{\alpha} $. Then $g=  \big[\int_{S}\chi_1(t)d\mu(t)\big](\chi_1+\chi_2)  / 2$. Therefore we get the special case  of the solution (4) corresponding to   $\int_{S}\chi_1(t)d\mu(t)=\int_{S}\chi_2(t)d\mu(t)= 2\alpha c$.\\
		 							 	 	(ii) There exist an  exponential  $\chi $   on $S$ such   $\alpha g=\chi$  and $f=\phi_{\chi}=\phi_{\chi}\circ \sigma$. Furthermore $\phi_{\chi}\neq0$ since $\{f, g\}$  is independent.
		 							 	 	Then by substituting the expression of $f$ and $g$ into (\ref{elqorachi2}), we get
		 							 	 	$$\phi_{\chi}(x)\left( \chi(y)\left[\int_{S}\chi(t)d\mu(t)-\dfrac{1}{\alpha} \right] \right)$$ $$+\chi(x)\left(\phi_{\chi}(y)\left[\int_{S}\chi(t)d\mu(t)-\dfrac{1}{\alpha} \right] +\chi(y) \int_{S}\phi_{\chi}(t)d\mu(t)\right)=0, \;\; x,y \in S,$$which implies,  by using  \cite[Lemma 5.1   (b)]{d}, that
		 							 	 	$ \int_{S}\chi(t)d\mu(t)=\dfrac{1}{\alpha}$ and $\int_{S}\phi_{\chi}(t)d\mu(t)=0.$
		 							 	 	This gives the solution of   case (5).
		 							 	 	\\{Case B}: Suppose that   $\int_{S}f(t)d\mu(t)\neq0$. The systems of equations
		 							 	 	combining each pair of (\ref{elqorachi2}), (\ref{63}) implies that   $
		 							 	 	\int_{S}(g+\alpha f)(x\sigma(y)t)d\mu(t)=(g+\alpha f)(x)	(g+\alpha f)(y)
		 							 	 	$, and $
		 							 	 	\int_{S}(g-\alpha f)(x\sigma(y)t)d\mu(t)=(g-\alpha f)(x)	(g-\alpha f)(y)$ for all $x,y\in S$.
		 							 	 	According to  Proposition \ref{p1}   there exist  two multiplicative functions  $\chi_1$ and $\chi_2$  on $S$ such that $\chi_1\int_{S}\chi_1(t)d\mu(t):= g+\alpha f$ and $\chi_2 \int_{S}\chi_1(t)d\mu(t):=	g-\alpha f,$
		 							 	 	with $\chi_1\circ \sigma=\chi_1$, $\chi_2\circ \sigma=\chi_2$. Furthermore,  $\chi_1$ and $\chi_2$ are exponentials, $ \int_{S}\chi_1(t)d\mu(t) \neq0 $ and $ \int_{S}\chi_2(t)d\mu(t)  \neq0 $,   since $\{f,g\}$ is   linearly independent.\\If $ \chi_1\int_{S}\chi_1(t)d\mu(t)\neq\chi_2\int_{S}\chi_2(t)d\mu(t)$, then    $\alpha\neq0$, and  	we deal with case (4).\\If $ \chi_1\int_{S}\chi_1(t)d\mu(t)= \chi_2 \int_{S}\chi_2(t)d\mu(t)$,  then   from \cite[Theorem 3.18]{g}   we have $ \chi_1=\chi_2$   since  $ \int_{S}\chi_1(t)d\mu(t)\neq0$ and $\int_{S}\chi_2(t)d\mu(t)\neq0$. Let $\chi:= \chi_1= \chi_2$ with $\chi$    exponential such that $\chi\circ\sigma=\chi$ and  $\int_{S}\chi (t)d\mu(t)\neq0$, we get $  g=  \chi\int_{S}\chi (t)d\mu(t)$, and $f=\Phi_\chi$ satisfies (\ref{44}).
		 							 	   This solution     occurs in  part (6).
		 							 	 	Conversely it is easy to check that the formulas of $f$ and $g$ listed in  Theorem 3.4 define  solutions of (\ref{elqorachi2}).	Finally,  suppose that  $S$ is a   topological semigroup. The cases (1)-(5) can be treated as  cases (1)-(5) of \cite[Theorem 4.4]{f}. In case (6)  we have  $0\neq f=\Phi_\chi \in C(S)$ is a solution of (\ref{44}) and since $\int_{S} \chi(t)d\mu(t)\neq0$ in this case we get
		 							 	 	$$\chi(x)=\dfrac{\int_{S} f(x\sigma(q)t)d\mu(t)-  f(x)\chi(q) \int_{S} \chi(t)d\mu(t)}{  f(q) \int_{S} \chi(t)d\mu(t)},  $$
		 							 	 	for some $q\in S$ such that  $f(q)\neq0$. Since $f, \sigma\in C(S)$  and  $\mu$ is a discrete measure we get that $\chi \in C(S)$.
		 							 	 	
		 							 	 	\end{proof}

\end{document}